\newcommand{\vertiii}[1]{{\left\vert\kern-0.25ex\left\vert\kern-0.25ex\left\vert #1 
    \right\vert\kern-0.25ex\right\vert\kern-0.25ex\right\vert}}
\newtheorem{theorem}{Theorem}[section]
\newtheorem{thmx}{Theorem}
\newtheorem{proposition}[theorem]{Proposition}
\newtheorem{lemma}[theorem]{Lemma}
\theoremstyle{definition}
\newtheorem{definition}[theorem]{Definition}
\newtheorem{remark}[theorem]{Remark}
\author{Petr H\'ajek}
\address[P. H\'ajek]{Czech Technical University in Prague, FEE, Dept.of mathematics, Technicka 2, 160 00 Prague 6 (Czech Republic)}
\email{hajek@math.cas.cz}
\author{Andr\'es Quilis}
\address[A. Quilis]{Universitat Polit\`ecnica de Val\`encia. Instituto Universitario de Matem\'atica Pura y Aplicada, Camino de Vera, s/n
46022 Valencia (Spain); and Czech Technical University in Prague, FEE, Dept.of mathematics, Technicka 2, 160 00 Prague 6 (Czech Republic)}
\email{anquisan@posgrado.upv.es}
\subjclass[2020]{46B03, 46B10}
\title[Counterexamples in rotundity of norms in Banach spaces]{Counterexamples in rotundity of norms in Banach spaces}
\keywords{Strict convexity, uniformly rotund norms, weakly uniformly rotund norms, uniformly rotund in every direction norms, higher order smoothness}
\begin{document}

\begin{abstract}
    We study several classical concepts in the topic of strict convexity of norms in infinite dimensional Banach spaces. Specifically, and in descending order of strength, we deal with Uniform Rotundity (UR), Weak Uniform Rotundity (WUR) and Uniform Rotundity in Every Direction (URED). Our first three results show that we may distinguish between all of these three properties in every Banach space where such renormings are possible. Specifically, we show that in every infinite dimensional Banach space which admits a WUR (resp. URED) renorming, we can find a norm with the same condition and which moreover fails to be UR (resp. WUR). We prove that these norms can be constructed to be Locally Uniformly Rotund (LUR) in Banach spaces admitting such renormings. Additionally, we obtain that in every Banach space with a LUR norm we can find a LUR renorming which is not URED. These results solve three open problems posed by A.J. Guirao, V. Montesinos and V. Zizler. The norms we construct in this first part are dense. 

    In the last part of this note, we solve a fourth question posed by the same three authors by constructing a $C^\infty$-smooth norm in $c_0$ whose dual norm is not strictly convex. 
\end{abstract}
\maketitle
\section{Introduction}

In this article, we obtain four main results related to convexity and smoothness of renormings in infinite dimensional Banach spaces. These theorems answer four open questions posed in the recently published monograph \cite{GuiMonZiz22}. The first three results deal with different strengthenings of strict convexity (or rotundity). In particular, we work with the following classical concepts:
\begin{definition}
    Let $X$ be a Banach space and let $\|\cdot\|$ be a norm in $X$. Denote by $B_{\|\cdot\|}$ and $S_{\|\cdot\|}$ the unit ball of $X$ and the unit sphere of $X$ respectively in the norm $\|\cdot\|$.
    \begin{itemize}
        \item[-] The norm $\|\cdot\|$ is \emph{uniformly rotund (UR)} if for every pair of sequences $\{x_n\}_{n\in\mathbb{N}}$ and $\{y_n\}_{n\in\mathbb{N}}$ in $B_{\|\cdot\|}$ such that $\left\|\frac{x_n+y_n}{2}\right\|\rightarrow 1$ we have that $\|y_n-x_n\|\rightarrow 0$.
        \item[-] The norm $\|\cdot\|$ is \emph{weakly uniformly rotund (WUR)} if for every pair of sequences $\{x_n\}_{n\in\mathbb{N}}$ and $\{y_n\}_{n\in\mathbb{N}}$ in $B_{\|\cdot\|}$ such that $\left\|\frac{x_n+y_n}{2}\right\|\rightarrow 1$ we have that $y_n-x_n\rightarrow 0$ in the weak topology of $X$.
        \item[-] The norm $\|\cdot\|$ is \emph{uniformly rotund in every direction (URED)} if for every $v\in S_{\|\cdot\|}$ and every pair of sequences $\{x_n\}_{n\in\mathbb{N}}$ and $\{y_n\}_{n\in\mathbb{N}}$ in $B_{\|\cdot\|}$ such that $y_n-x_n=\lambda_nv$ for some $\lambda_n\in\mathbb{N}$ and every $n\in\mathbb{N}$, and such that $\left\|\frac{x_n+y_n}{2}\right\|\rightarrow 1$, we have that $\lambda_n\rightarrow 0$.
        \item[-] The norm $\|\cdot\|$ is \emph{locally uniformly rotund (LUR)} at a point $x\in S_{\|\cdot\|}$ if for every sequence $\{y_n\}_{n\in\mathbb{N}}\in B_{\|\cdot\|}$ such that $\left\|\frac{x+y_n}{2}\right\|\rightarrow 1$ we have that $\|x-y_n\|\rightarrow 0$.
    \end{itemize}
\end{definition}
All four of these concepts have been studied extensively in the field of Renorming of Banach spaces, producing numerous and deep results with equally important applications. We refer to \cite{DevGodZiz93} or \cite{GuiMonZiz22} for a comprehensive view of the topic. 

It is direct from the definition that UR implies WUR, which in turn implies URED. It is also well known that these concepts are distinct in general. For instance, it is a classical result that a Banach space is superreflexive if and only if it admits a UR renorming (see e.g.: Theorem IV.4.1 in \cite{DevGodZiz93}), while the class of Banach spaces admitting a WUR norm is much larger, and for separable Banach spaces it is equivalent to having a separable dual (see \cite{Haj96}). All separable Banach spaces can be renormed with a URED norm (\cite{Ziz71}). 

In this note we show that we may differentiate between all of these notions in every infinite dimensional Banach space in which such renormings are possible. Moreover, all renormings we construct may be taken to be LUR if the Banach space we are dealing with admits a norm with this property; which shows that the three uniform concepts are different even in the presence of strong local convexity properties. Finally, we also prove the density of this kind of norms.

Let us now state precisely the first three main results:
\begin{thmx}
    \label{Main_Theorem_A}
    Let $X$ be an infinite dimensional Banach space with an LUR norm. Then there exists an equivalent norm in $X$ which is LUR and fails to be URED. Moreover, the class of norms with this property is dense.
\end{thmx}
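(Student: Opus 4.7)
My plan is to fix a direction $v \in S_{\|\cdot\|}$, build a sequence of thin $v$-parallel convex ``spike'' bodies inside the LUR ball that approach the original sphere from within, and form a new unit ball by taking (carefully smoothed) convex hulls. The spikes will produce URED-failing sequences along $v$ by having fixed length $2\delta$ while their midpoints approach the new sphere; the geometric choices will preserve LUR at every sphere point.

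First, fix $v \in S_{\|\cdot\|}$ and a norming functional $f \in S_{X^*}$ with $f(v) = 1$. Since $\ker f$ is infinite dimensional, Riesz's lemma produces a $\tfrac12$-separated sequence $\{z_n\} \subset \ker f \cap S_{\|\cdot\|}$; these are ``spread out'' and transverse to $v$ in the dual sense. For a small $\delta > 0$ (which will drive density), construct smooth strictly convex spike bodies $K_n$ of length $2\delta$ along $v$ and transverse diameter $O(\delta/n)$, placed with centers at $\lambda_n z_n$ with $\lambda_n \nearrow 1$, and poking slightly outside $B_{\|\cdot\|}$ by an $n$-dependent amount $\eta_n \to 0$. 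Let $B' = \overline{\operatorname{conv}}\bigl(B_{\|\cdot\|} \cup \bigcup_n K_n\bigr)$ and let $\vertiii{\cdot}$ be its Minkowski functional. The inclusions $B_{\|\cdot\|} \subset B' \subset (1+\delta)B_{\|\cdot\|}$ give equivalence; density will follow by taking $\delta \to 0$.

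To see that URED fails in direction $v$, take $x_n, y_n$ at the two tips of $K_n$ so that $y_n - x_n = 2\delta v$; their midpoint is $\lambda_n z_n$, whose $\vertiii{\cdot}$-norm tends to $1$ since $\|z_n\| = 1$ and $\lambda_n \nearrow 1$, whereas $2\delta$ is bounded away from zero. This directly contradicts URED in the direction $v$.

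The main obstacle is verifying that $\vertiii{\cdot}$ is LUR. A naive convex hull generically produces flat faces where each spike meets the main ball, which would destroy even strict convexity. The delicate choices --- transverse thinness $O(\delta/n)$, smooth spherical caps on each $K_n$, the $\tfrac12$-separation of the $z_n$, and the decay $\eta_n \to 0$ --- must be tuned so that every point of $\partial B'$ is either on the unperturbed part of $S_{\|\cdot\|}$, on a smooth cap of a single $K_n$, or on a join region whose strict convexity is inherited in the limit. A compartmentalization argument will then show that any LUR-testing sequence $\{w_k\}$ with $\vertiii{(w+w_k)/2} \to \vertiii{w}$ eventually concentrates inside a single one of these regions, where LUR is inherited from $\|\cdot\|$ or from the smooth structure of the relevant $K_n$. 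Ruling out accumulation of LUR-testing sequences across infinitely many spikes --- using the spread-out placement of the $z_n$ and the $n$-dependent thinning --- is the technical heart of the proof.
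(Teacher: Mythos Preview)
Your proposal has a genuine gap at the step you yourself flag as ``the technical heart.'' When you take $B' = \overline{\operatorname{conv}}\bigl(B_{\|\cdot\|}\cup\bigcup_n K_n\bigr)$ with each $K_n$ poking outside $B_{\|\cdot\|}$, the boundary of $B'$ necessarily contains a ruled ``collar'' joining $\partial K_n$ to $\partial B_{\|\cdot\|}$: for any extreme point $p$ of $K_n$ lying outside $B_{\|\cdot\|}$, the boundary of the convex hull contains straight segments from $p$ to points of $S_{\|\cdot\|}$. These segments destroy strict convexity, not merely LUR. Putting smooth caps on $K_n$ does not help, because the flatness is produced by the convex hull operation at the join, not by the shape of $K_n$ itself. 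The only way to avoid this is to arrange that $\partial K_n$ and $S_{\|\cdot\|}$ are tangent (share supporting hyperplanes) along their entire intersection; but this requires building each $K_n$ out of the unknown geometry of the LUR ball, and even then you would still owe a proof that LUR---a quantitative, not merely strict, convexity condition---survives at every join point. None of this is supplied; ``delicate choices must be tuned'' is a statement of hope, not an argument.

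The paper sidesteps this obstruction entirely by carving \emph{inward} rather than building outward. For each $n$ it intersects $B_{\|\cdot\|}$ with a cylinder in the direction $h_n$ (here $h_n\equiv y_1$ is constant), obtaining a norm $\vertiii{\cdot}_{\alpha_n,\delta,C}$ whose sphere genuinely contains a segment parallel to $y_1$; this norm is not even strictly convex. The crucial move is then to form the quadratic average $\vertiii{\cdot}_n = \bigl(\tfrac{1}{(1+\eta_n)^2}\vertiii{\cdot}_{\alpha_n,\delta,C}^2 + \tau_n\|\cdot\|^2\bigr)^{1/2}$, which is automatically LUR by Lemma~\ref{Lemma5:Averageofrotundisrotund} since one summand is the original LUR norm. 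The final norm $\vertiii{\cdot}_\Omega = \max\{\|\cdot\|,\sup_n\vertiii{\cdot}_n\}$ is then LUR because the functionals $f_n$ are chosen weak$^*$ null, forcing $\vertiii{\cdot}_\Omega$ to coincide locally with a \emph{finite} maximum of LUR norms (Lemma~\ref{Lemma6:Maximumofrotund}). Thus LUR comes for free from two structural lemmas, and no geometric tangency or compartmentalization analysis is needed. Your Riesz-lemma separation of the $z_n$ is too weak a substitute for this weak$^*$-null localization mechanism.
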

\begin{thmx}
    \label{Main_Theorem_B}
    Let $X$ be an infinite dimensional Banach space with a URED norm. Then there exists an equivalent norm in $X$ which is URED and not WUR. If $X$ admits a LUR norm, then this norm can also be taken to be LUR. Moreover, the class of norms with this property is dense.
\end{thmx}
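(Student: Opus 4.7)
My plan is to begin with the given URED norm on $X$ and, if $X$ admits a LUR norm, replace it by one that is simultaneously URED and LUR, obtainable via the quadratic sum of the URED norm with the LUR one (this quadratic sum preserves URED since the gap in the midpoint inequality in each summand must tend to $0$, forcing one summand to inherit its URED conclusion). Call this baseline norm $\|\cdot\|$. The main task is then to construct a small perturbation $\vertiii{\cdot}$ of $\|\cdot\|$ that destroys WUR while preserving both URED and LUR, with the perturbation scalable so that density follows.

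The key ingredient is a sequence $\{v_n\} \subset S_{\|\cdot\|}$ that is not weakly null. Since $X$ is infinite dimensional, I can fix $\varphi \in S_{X^*}$ and choose $v_n$ with $\varphi(v_n) \geq \delta > 0$ for every $n$. To break WUR I need sequences $(x_n, y_n)$ with $\vertiii{(x_n + y_n)/2} \to 1$ and $y_n - x_n = \mu_n v_n$ for $\mu_n$ bounded below, since then $\varphi(y_n - x_n) \geq \mu_n \delta$ witnesses $y_n - x_n \not\rightharpoonup 0$. The construction of $\vertiii{\cdot}$ proceeds by making a controlled geometric modification of $B_{\|\cdot\|}$ which, at an ``$n$-th level'' of the sphere, admits an approximate segment of length $\geq \mu > 0$ in direction $v_n$, while along any fixed direction $v \neq 0$ the sphere remains sufficiently strictly convex. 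In practice this can be realized through a convex body built from $B_{\|\cdot\|}$ together with an auxiliary family of functionals $\{\xi_n\} \subset X^*$ (or, equivalently, near-extreme points in the dual) aligned with the directions $v_n$ and whose ``influence'' decays in $n$, so that no fixed direction interacts with infinitely many levels.

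The verification splits into four tasks. For URED: given $y_n - x_n = \lambda_n v$ with $\vertiii{(x_n + y_n)/2} \to 1$, deduce $\lambda_n \to 0$ from the URED of $\|\cdot\|$ combined with the fact that only finitely many levels of the perturbation contribute nontrivially along the fixed direction $v$. For the failure of WUR: exhibit the diagonal pairs $(x_n, y_n)$ described above explicitly. For LUR: preserved by arranging the perturbation smoothly enough that actual flat faces are replaced by strictly convex near-flat features, so local uniform rotundity continues to hold at every point of the sphere. For density: simply scale the magnitude of the perturbation by $\varepsilon \downarrow 0$ and note that this makes $\vertiii{\cdot}$ arbitrarily close to $\|\cdot\|$.

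The main obstacle is the geometric design of the perturbation: one must simultaneously introduce near-flat structure along the non-weakly-null sequence $\{v_n\}$ to destroy WUR, and exclude such structure along any fixed direction so that URED (and LUR) survives. This balance between ``enough directional softness at level $n$'' and ``strict convexity everywhere in each fixed direction'' is the technical heart of the proof, and requires a careful diagonalization in which the softening features ``rotate'' with $n$ fast enough not to accumulate in any single direction. Furthermore, maintaining LUR simultaneously demands that the softening be smooth rather than introducing genuine flat faces, which adds a quantitative layer to the construction.
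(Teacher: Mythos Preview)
Your overall strategy matches the paper's: start from a URED (and, when available, LUR) norm, insert at each level $n$ a controlled near-segment in a direction $h_n$ chosen so that $\{h_n\}$ is not weakly null (the paper takes $h_n=y_{2n+1}+v_0$ for a fixed $v_0$, which is exactly your $\varphi(h_n)\geq\delta$ idea), and combine the levels into a norm that is locally a finite maximum so that LUR survives. The failure of WUR is witnessed precisely as you describe.

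The genuine gap is in your URED argument. You claim URED follows because ``only finitely many levels of the perturbation contribute nontrivially along the fixed direction $v$.'' That is not the operative mechanism, and as stated it is false: which level is active at the midpoint $(c_k+d_k)/2$ depends on \emph{where} that midpoint sits on the sphere, not on the direction $d_k-c_k=\xi_k v$. Nothing prevents the midpoints from wandering so that the active level $n_k\to\infty$, and in that regime you cannot simply quote URED of a finite maximum. The paper resolves this case by a projection trick: at level $n_k$ one passes to $\ker(g_{n_k})$ via $P_{\alpha_{n_k}}x=x-\langle g_{n_k},x\rangle h_{n_k}$, extracts $Q_{\|\cdot\|}(P_{\alpha_{n_k}}c_k,P_{\alpha_{n_k}}d_k)\to 0$ from the structure of the level-$n_k$ norm, and then observes that
\[
P_{\alpha_{n_k}}(d_k-c_k)=\xi_k v-\xi_k\langle g_{n_k},v\rangle h_{n_k}
\]
differs from $\xi_k v$ by a term tending to $0$ \emph{because the functionals $g_{n_k}$ are arranged to be weak$^*$ null}. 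Only after this correction can URED of the original $\|\cdot\|$ in the fixed direction $v$ be invoked. Your plan lacks this ingredient; in particular, the requirement that the ``vertical'' functionals $g_n$ (not just the ``slicing'' functionals $f_n$) be weak$^*$ null is essential to the URED preservation and must be built into the construction from the outset.
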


\begin{thmx}
    \label{Main_Theorem_C}
    Let $X$ be an infinite dimensional superreflexive Banach space. Then there exists an equivalent norm in $X$ which is LUR and WUR but not UR. Moreover, the class of norms with this property is dense.
\end{thmx}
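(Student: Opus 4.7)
The plan is to modify a uniformly rotund norm on $X$ so as to introduce asymptotic flats along a weakly null direction, breaking UR while preserving both LUR and WUR. By superreflexivity of $X$ (Enflo--Pisier), there is an equivalent UR norm $\rho$ on $X$, which is automatically LUR and WUR. For the density assertion, I would invoke the classical Asplund-averaging principle: given any equivalent norm $|\cdot|$, the norm $(\rho(\cdot)^{2} + \delta |\cdot|^{2})^{1/2}$ is UR and, for small $\delta$, approximates $|\cdot|$ as closely as desired. This reduces the density statement to showing that in every Banach--Mazur neighborhood of each UR norm $\rho$ there sits a LUR+WUR+not-UR norm.

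Since $X$ is infinite-dimensional and reflexive, I would extract a $\rho$-normalized, weakly null basic sequence $(v_{n}) \subset X$ using weak compactness of $B_{X}$ (take a weakly convergent subsequence of any normalized sequence, and if necessary pass to consecutive differences and renormalize). The desired new norm $|\cdot|$ will then be a small convex perturbation of $\rho$ involving $(v_{n})$, designed so that: (i) it is strictly convex at every point, preventing actual line segments in the sphere and thus preserving LUR; (ii) its modulus of convexity degenerates at some fixed $\varepsilon_{0} > 0$, witnessed by sequences $x_{n} \approx a + \varepsilon_{n} v_{n}$, $y_{n} \approx a - \varepsilon_{n} v_{n}$ for some fixed $a \in S_{\rho}$ and suitable $\varepsilon_{n}$ bounded below, breaking UR; (iii) any pair of sequences in the unit ball with midpoint approaching the sphere must have differences asymptotically aligned with the tail of $(v_{n})$, giving WUR.

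The verification will split into three parts. LUR will be inherited from $\rho$ locally, because the perturbation is asymptotically concentrated along vanishing directions and does not create new local defects at any fixed point of the sphere. Failure of UR will be exhibited by the explicit witnesses above. For WUR, the argument will show that if $|(x_{m} + y_{m})/2| \to 1$ then the UR contribution of $\rho$ to $|\cdot|$ forces the bulk of $x_{m} - y_{m}$ to concentrate along a tail of the $(v_{n})$'s, so that $\phi(x_{m} - y_{m}) \to 0$ for every $\phi \in X^{*}$.

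The hardest part will be finding the right perturbation. Naive attempts --- e.g., taking the convex hull of $B_{\rho}$ with a sequence of segments of fixed length along $(v_{n})$, or subtracting a seminorm term from $\rho^{2}$ --- either destroy WUR (when flats lie in fixed non-weakly-null directions, allowing one to move back and forth along a segment in a single $v_{n}$), or destroy LUR (at the extreme points of such flats), or fail to break UR at all (when the modification is too gentle, as happens for quadratic-form perturbations of a UR norm, whose sum remains UR). The correct construction must produce asymptotically-flat-but-strictly-convex geometry along $(v_{n})$, with the decay rate of the perturbation carefully calibrated so that flats are present only in the limit; simultaneously verifying LUR, WUR, and non-UR is the technical core of the argument.
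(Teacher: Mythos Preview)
Your overall strategy is exactly right and aligns with the paper's: start from a UR norm, introduce asymptotic near-flats along a weakly null sequence so that UR fails while WUR survives (because the witnessing differences are weakly null), and argue that LUR persists because the perturbation is ``locally finite'' at every point of the sphere. The density reduction via Asplund averaging is also the right idea, though note you wrote the formula backwards: for small $\delta$ the norm $(\rho^{2}+\delta|\cdot|^{2})^{1/2}$ is close to $\rho$, not to $|\cdot|$; you want $(|\cdot|^{2}+\delta\rho^{2})^{1/2}$.

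The genuine gap is that you never give the perturbation. You yourself flag this (``The hardest part will be finding the right perturbation'') and then only list constructions that fail. That is precisely the content of the theorem, and without it there is no proof. A few concrete obstructions you would run into: a single weakly null sequence $(v_{n})$ is not enough data --- you also need a sequence of base points $x_{n}$ and functionals $f_{n}$ that cut out \emph{disjoint} slices of $B_{\rho}$, so that for each $x\neq 0$ only finitely many of the perturbations are active near $x$ (this is what makes the new norm locally a finite maximum, hence LUR). Achieving disjointness of the slices is not automatic from $(v_{n})$ weakly null; the paper gets it by applying the Josefson--Nissenzweig-type lemma to $X^{*}$ (using reflexivity) to produce an almost-biorthogonal system, and then uses the UR of $\rho$ to show that the slices $\{f_{n}>1-\delta\}$ are small enough in diameter to be pairwise separated. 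Second, within each slice the new ball must be a genuine cylinder in the direction $v_{n}$ (so that a segment of \emph{fixed} length $\varepsilon_{0}$ sits in the sphere of one of the auxiliary norms), but then one must reinject a tiny amount of strict convexity to avoid destroying LUR; the paper does this by replacing the raw cylinder norm by $((1-\delta)^{2}\vertiii{\cdot}^{2}_{\text{cyl}}+\varepsilon\|\cdot\|^{2})^{1/2}$. Finally, the WUR verification is not just ``differences concentrate along the tail'': one has to rule out the case where the maximum in the definition of the new norm is attained at some fixed index along the whole sequence (handled because each such finite maximum is UR), and then in the remaining case push the $Q$-functional through the projection $P_{\alpha_{n_{k}}}$ and use UR of $\rho$ to get norm convergence of the projected differences, after which the complementary part lies in $\mathrm{span}\,h_{n_{k}}$ and is weakly null.

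In short: your outline is a correct road map and matches the paper's, but the actual construction and the three verifications each require nontrivial ideas that your proposal leaves entirely open.
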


As mentioned above, these three theorems answer three questions in \cite{GuiMonZiz22}, specifically Questions 52.3.4, 52.3.7 and 52.3.1 respectively (page 500). Notice as well that, by duality, Theorem \ref{Main_Theorem_C} implies that in every superreflexive space we may approximate every norm by a Fréchet smooth norm which is Uniformly Gâteaux but fails to be Uniformly Fréchet. This answers Question 52.1.2.4 of \cite{GuiMonZiz22} as well, which was already solved differently in \cite{Qui22} by constructing a Fréchet differentiable norm which fails to be Uniformly Gâteaux. 

The renormings we construct to prove Theorems \ref{Main_Theorem_A}, \ref{Main_Theorem_B} and \ref{Main_Theorem_C} come from a single method, applied with varying parameters to obtain the desired properties in each situation. Intuitively, the way we build these renormings is by defining first a countable family of norms with the following property: their unit sphere coincides with the original unit sphere except in a particular slice, where the new sphere contains a strictly convex approximation of a segment in a given direction, which may be different for each of the countably many norms. By suitably combining the countable family of norms, we obtain a renorming which fails to be UR, WUR or URED while satisfying a weaker rotundity condition, depending on the choice of the directions in which the approximated segments appear. The idea of using countably many norms which differ from the original norm only on a certain slice was also used in \cite{Qui22}, where such a technique is applied to the construction of norms with specific smoothness properties.

To finish the introduction, let us discuss the fourth and last of the main theorems of this article, which we state now:
\begin{thmx}
    \label{Main_Theorem_D}
    There exists a $C^\infty$-smooth norm on $c_0$ such that the dual norm is not strictly convex. Moreover, the norm $\|\cdot\|_\infty$ can be uniformly approximated on bounded sets by norms with this property. 
\end{thmx}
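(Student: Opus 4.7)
The plan is to build the required smooth norm as the Minkowski gauge of a convex body in $c_0$ that is inscribed in $B_{\|\cdot\|_\infty}$, very close to it, but which has been smoothly rounded at its ``corners'' in a way that preserves tangency to a chosen countable sequence of boundary points. The non-strict convexity of the dual will then be extracted from this tangency: two distinct non-norm-attaining functionals in $\ell_1$ will approach their dual norms simultaneously along a shared non-convergent sequence. This is compatible with $C^\infty$-smoothness of the primal because $c_0$ is non-reflexive: the extremizing sequence need not converge, so uniqueness of support functionals at every boundary point of the inscribed body poses no contradiction.

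First I would choose functionals $f = \sum_n a_n e_n^*$ and $g = \sum_n b_n e_n^*$ in $\ell_1$ with $a_n, b_n > 0$, $\sum_n a_n = \sum_n b_n = 1$, and $a_n \neq b_n$ for at least one $n$. Then $\|f\|_1 = \|g\|_1 = \tfrac12\|f+g\|_1 = 1$, and none of $f$, $g$, or $(f+g)/2$ attains its norm on $B_{\|\cdot\|_\infty}$; yet the truncations $x_N = \sum_{k=1}^N e_k \in B_{\|\cdot\|_\infty}$ satisfy $f(x_N), g(x_N) \to 1$ simultaneously, providing the key shared extremizing sequence.

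Next I would construct a $C^\infty$-smooth, symmetric convex body $B \subset c_0$ with $(1-\varepsilon)B_{\|\cdot\|_\infty} \subset B \subset B_{\|\cdot\|_\infty}$ and $x_N \in B$ for every $N$. Informally, $B$ is obtained from $B_{\|\cdot\|_\infty}$ by excising, via a summable family of $C^\infty$-smooth convex bumps flat to all orders off their supports, a small cap around every non-smooth boundary point of $B_{\|\cdot\|_\infty}$ that lies off the rays through the $x_N$. The Minkowski gauge $\|\cdot\|_B$ is then a $C^\infty$-smooth norm on $c_0 \setminus \{0\}$ satisfying $\|\cdot\|_\infty \leq \|\cdot\|_B \leq (1-\varepsilon)^{-1}\|\cdot\|_\infty$, which handles the uniform approximation on bounded sets. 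For the dual, the containment $B \subset B_{\|\cdot\|_\infty}$ gives $\|\phi\|_B^* \leq \|\phi\|_1$ for every $\phi \in \ell_1$, while $x_N \in B$ combined with $f(x_N), g(x_N) \to 1$ forces $\|f\|_B^* = \|g\|_B^* = 1$ and, by the same sequence, $\|(f+g)/2\|_B^* = 1$, yielding the required non-strict convexity at the midpoint.

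The main obstacle I expect lies in the construction of $B$ itself. The non-smooth boundary points of $B_{\|\cdot\|_\infty}$ form a set that accumulates at every $x_N$ (for instance, $x_N + t e_{N+1}$ with $0 < t < 1$ is non-smooth and arbitrarily close to $x_N$), so the family of excised caps must be designed so that their sizes shrink rapidly enough near each $x_N$ to keep it on the new boundary with a unique tangent hyperplane. Moreover, the total perturbation must be $C^\infty$ in \emph{every} derivative, not merely $C^k$ for each $k$; this requires the bumps to vanish flatly to infinite order outside their caps, and the summation to converge in every seminorm associated to higher derivatives. The non-accumulation of the sequence $(x_N)$ in $c_0$, a consequence of the non-reflexivity, is what ultimately makes the combinatorial bookkeeping possible, and it is exactly this geometric feature that the construction must exploit.
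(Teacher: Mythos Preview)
Your plan has a genuine obstruction at the second step: with the specific norming sequence $x_N=\sum_{k=1}^N e_k$ and the inclusion $B\subset B_{\|\cdot\|_\infty}$, no G\^ateaux-smooth body $B$ containing every $x_N$ can exist. Since $\|x_N\|_\infty=1$, the condition $x_N\in B\subset B_{\|\cdot\|_\infty}$ forces $\|x_N\|_B=1$. By convexity the segment $[x_{N-1},x_N]$ lies in $B$, and every point on it has first coordinate $1$, hence $\|\cdot\|_\infty$-norm $1$, hence $\|\cdot\|_B$-norm exactly $1$. Now look at $x_2=e_1+e_2$ in the direction $e_2$: for $t\in[0,1]$ the point $x_2-te_2$ lies on $[x_1,x_2]$, so $\|x_2-te_2\|_B=1$ and the left directional derivative is $0$; for $t>0$ one has $\|x_2+te_2\|_B\geq\|x_2+te_2\|_\infty=1+t$, so the right directional derivative is at least $1$. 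Thus $\|\cdot\|_B$ is not even G\^ateaux differentiable at $x_2$. The difficulty you flag as ``combinatorial bookkeeping'' is in fact an impossibility: the points $x_N$ are themselves non-smooth corners of $B_{\|\cdot\|_\infty}$, and keeping them on the boundary of a sub-body pins that corner in place.

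The paper sidesteps this by never prescribing points exactly on the new sphere. It first builds, for each $n$, a polyhedral norm $\vertiii{\cdot}_{\infty,n}$ on $\ell_\infty^n$ and a $C^\infty$ approximation $\vertiii{\cdot}_n$ of it, engineered so that $\vertiii{P_nx}_n=\vertiii{P_{n-1}x}_{n-1}$ whenever $|x_n|\leq\tfrac{\delta}{2}\|x\|_\infty$; the norm on $c_0$ is then $\vertiii{x}=\sup_n\vertiii{P_nx}_n$, which is locally a finite-dimensional $C^\infty$ norm. Only afterwards are the two functionals $f,g\in\ell_1$ and the norming sequence $z_n$ read off from the polyhedral skeleton, with $\vertiii{z_n}\to 1$ merely in the limit and the coordinates of $z_n$ strictly below $1$ except the last. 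If you want to rescue your outline you must at minimum let the norming sequence drift off the sphere and control its new norm only asymptotically, and you must abandon the picture of ``excising caps away from the $x_N$''; what remains is essentially the paper's construction.
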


This result answers in particular part (i) of Question 139 in \cite{GuiMonZiz16} (part (ii) was already solved in \cite{Qui22}), which is posed again as Question 52.1.4.6 in \cite{GuiMonZiz22} (page 498). To put it in context, recall that by a classical \v{S}mulyan result, a norm $\|\cdot\|$ in any Banach space is Gâteaux differentiable as soon as the dual norm $\|\cdot\|^*$ is strictly convex. It is known that the converse is not true in general: indeed, by a result of V. Klee \cite{Kle59} (Proposition 3.3), a Gâteaux differentiable norm with non strictly convex dual can be constructed in every separable non-reflexive Banach space. For such spaces which additionally have a separable dual (such as $c_0$), this norm can be taken to be Fréchet differentiable, as shown by A.J. Guirao, V. Montesinos and V. Zizler in \cite{GuiMonZiz12}. Even in reflexive spaces, where every Gâteaux differentiable norm does have a strictly convex dual norm, a classical result from D. Yost in \cite{Yos81} proves that we may construct Fréchet differentiable norms whose dual norm is not LUR.

The construction of the norm in the proof of Theorem \ref{Main_Theorem_D} is based around the $C^\infty$-smooth approximation of certain $n$-dimensional polyhedra constructed inductively.

Let us now briefly discuss the structure of this article. In section 2 we set the notation to be used throughout the rest of the note, and we recall some more definitions and preliminary results. In section 3 we lay out the construction of the renormings in order to prove Theorems \ref{Main_Theorem_A}, \ref{Main_Theorem_B} and \ref{Main_Theorem_C} regarding rotundity. Finally, section 4 is dedicated to proving Theorem \ref{Main_Theorem_D} about a $C^\infty$-smooth norm in $c_0$ with non strictly convex dual norm.

\section{Notation and Preliminary results}

We write $B_{\|\cdot\|}$ and $S_{\|\cdot\|}$ to denote the unit ball and the unit sphere of a Banach space with respect to the norm $\|\cdot\|$.

We use the definitions of UR, WUR, URED and LUR given in the previous section. Additionally, we say that a norm $\|\cdot\|$ in a Banach space $X$ is \emph{strictly convex} if whenever $x,y\in B_{\|\cdot\|}$ with $x\neq y$ we have that $\left\|\frac{x+y}{2}\right\|<1$. 

Regarding smoothness, we will use the standard definitions of differentiability in Banach spaces, which can be found, for instance, in \cite{DevGodZiz93}.

Given a Banach space $X$ and a class of equivalent norms $\mathcal{S}$ defined on $X$, we say that a norm $\|\cdot\|$ is \emph{uniformly approximated on uniform sets} by norms in $\mathcal{S}$ if for every $\varepsilon>0$ there exists a norm $\vertiii{\cdot}\in\mathcal{S}$ such that $\vertiii{\cdot}\leq\|\cdot\|\leq(1+\varepsilon)\vertiii{\cdot}$. If every norm in $X$ can be uniformly approximated on uniforms sets by norms in a class $\mathcal{S}$, we say that the class $\mathcal{S}$ is \emph{dense}.

Following the notation of \cite{GuiMonZiz22}, given a norm $\|\cdot\|$ in a Banach space $X$ and two points $x,y\in X$, we define the expression 
$$Q_{\|\cdot\|}(x,y)=2\|x\|^2+2\|y\|^2-\|x+y\|^2\geq 0. $$
We will use the following remark with an elementary proof regarding this function:
\begin{remark}
\label{remark_Q}
Let $X$ be a Banach space. For every $n\in\mathbb{N}$, let $\|\cdot\|_n$ be an equivalent norm on $X$, and let $\{x_n\}_{n\in\mathbb{N}},  \{y_n\}_{n\in\mathbb{N}}\subset X$ be two sequences such that $\limsup_{n\rightarrow\infty}\|x_n\|_n\leq 1$, $\limsup_{n\rightarrow\infty}\|y_n\|_n\leq 1$, and $\left\|\frac{x_n+y_n}{2}\right\|_n\rightarrow 1$. Then $Q_{\|\cdot\|_n}(x_n,y_n)\rightarrow 0$.  
\end{remark}

As suggested by the previous remark, the function $Q_{\|\cdot\|}$ can be used to describe rotundity qualities of the norm $\|\cdot\|$. In particular, we will use the following known characterizations:

\begin{proposition}
    \label{Prop3.UR_Q_Characterization}
    Let $X$ be a Banach space. A norm $\|\cdot\|$ is UR if and only if for every pair of sequences $\{x_n\}_{n\in\mathbb{N}}$ and $\{y_n\}_{n\in\mathbb{N}}$ in $X$ such that $Q_{\|\cdot\|}(x_n,y_n)\rightarrow 0$, we have that $\|x_n-y_n\|\rightarrow 0$.
\end{proposition}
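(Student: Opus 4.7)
I would prove the two implications separately, with the key quantitative observation being the elementary inequality
\[ Q_{\|\cdot\|}(x,y) \;\geq\; (\|x\|-\|y\|)^2, \]
which follows from $\|x+y\|\leq\|x\|+\|y\|$ after squaring and rearranging. In particular $Q(x_n,y_n)\to 0$ already forces $|\|x_n\|-\|y_n\||\to 0$. Combined with the evident $2$-homogeneity $Q_{\|\cdot\|}(\lambda x,\lambda y)=\lambda^2 Q_{\|\cdot\|}(x,y)$, this will let me reduce bounded sequences to sequences essentially on the unit sphere, where the defining property of UR can be applied.

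The ``if'' direction is immediate from Remark \ref{remark_Q} applied to the constant sequence of norms $\|\cdot\|_n:=\|\cdot\|$: given $\{x_n\},\{y_n\}\subset B_{\|\cdot\|}$ with $\|(x_n+y_n)/2\|\to 1$, the remark yields $Q_{\|\cdot\|}(x_n,y_n)\to 0$, and the hypothesis forces $\|x_n-y_n\|\to 0$, i.e.\ uniform rotundity.

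For the converse, assume $\|\cdot\|$ is UR, take bounded sequences $\{x_n\},\{y_n\}$ with $Q_{\|\cdot\|}(x_n,y_n)\to 0$, and argue by contradiction: pass to a subsequence along which $\|x_n-y_n\|\geq\varepsilon$ and $\|x_n\|,\|y_n\|\to\alpha$ for a common $\alpha\geq 0$ (common by the key inequality above). If $\alpha=0$, then $\|x_n-y_n\|\leq\|x_n\|+\|y_n\|\to 0$, already contradicting the choice of subsequence. If $\alpha>0$, set $s_n:=(\|x_n\|+\|y_n\|)/2\to\alpha$ and form the rescaled vectors $u_n:=x_n/s_n$, $v_n:=y_n/s_n$; by $2$-homogeneity $Q_{\|\cdot\|}(u_n,v_n)=Q_{\|\cdot\|}(x_n,y_n)/s_n^2\to 0$, while $\|u_n\|,\|v_n\|\to 1$. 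A further mild rescaling by $r_n:=\max(\|u_n\|,\|v_n\|,1)\to 1$ places the sequences inside $B_{\|\cdot\|}$ without disturbing the relation $\|(u_n/r_n+v_n/r_n)/2\|\to 1$ that is extracted from $Q\to 0$ and $\|u_n\|,\|v_n\|\to 1$. The definition of UR then gives $\|u_n-v_n\|\to 0$, and multiplying through by $s_n$ yields $\|x_n-y_n\|\to 0$, contradicting the choice of subsequence.

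The only real technical step is this rescaling: the UR condition is formulated for sequences in the unit ball, whereas the $Q$-formulation is scale-free; choosing the correct normalization $s_n$ and verifying that the auxiliary renormalization into $B_{\|\cdot\|}$ preserves both the vanishing of $Q$ and the convergence of the midpoint norm to $1$ is where the bookkeeping lies. Once this is set up, the result reduces cleanly to the definition of UR.
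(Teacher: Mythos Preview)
The paper does not supply its own proof of this proposition; it is stated as a known characterization with a reference to \cite{GuiMonZiz22}. So there is no argument in the paper to compare yours against.

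Your argument is correct as far as it goes, but you should be aware of a subtlety in the statement itself. In the ``only if'' direction you write ``take \emph{bounded} sequences $\{x_n\},\{y_n\}$'', whereas the proposition as printed places no boundedness restriction on the sequences. This is not an oversight you can simply repair: without boundedness the implication actually fails. For instance, in $\ell_p$ with $p>2$ (which is UR), take $x_n=ne_1$ and $y_n=ne_1+e_2$. A short expansion gives
\[
Q_{\|\cdot\|}(x_n,y_n)=2n^2+2(n^p+1)^{2/p}-\big((2n)^p+1\big)^{2/p}
=\tfrac{4}{p}\big(1-2^{1-p}\big)n^{2-p}+o(n^{2-p})\longrightarrow 0,
\]
while $\|x_n-y_n\|=1$ for every $n$. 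Thus the ``only if'' direction is false for unbounded sequences, and boundedness is genuinely needed in the hypothesis (this is also the form in which the paper actually applies the proposition, since the sequences arising there are uniformly bounded).

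Once boundedness is assumed, your plan is sound: the inequality $Q(x,y)\geq(\|x\|-\|y\|)^2$ forces a common limit $\alpha$ for $\|x_n\|$ and $\|y_n\|$ along a subsequence; the case $\alpha=0$ is trivial; and for $\alpha>0$ the rescaling by $s_n$ followed by the cosmetic normalization by $r_n$ correctly reduces to the defining property of UR. The ``if'' direction via Remark~\ref{remark_Q} is immediate. So your write-up is a valid proof of the correct (bounded) version of the characterization; just make the boundedness hypothesis explicit rather than tacit.
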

\begin{proposition}
    \label{Prop4.WUR_Q_Characterization}
    Let $X$ be a Banach space. A norm $\|\cdot\|$ is WUR if and only if for every pair of sequences $\{x_n\}_{n\in\mathbb{N}}$ and $\{y_n\}_{n\in\mathbb{N}}$ in $X$ such that $Q_{\|\cdot\|}(x_n,y_n)\rightarrow 0$, we have that $x_n-y_n\rightarrow 0$ in the weak topology of $X$.
\end{proposition}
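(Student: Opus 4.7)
The plan is to establish the two implications separately, invoking Remark \ref{remark_Q} for the sufficient direction and normalizing the sequences onto the unit sphere for the necessary direction. For ($\Leftarrow$), take sequences $\{x_n\},\{y_n\} \subset B_{\|\cdot\|}$ with $\|(x_n+y_n)/2\| \to 1$; applying Remark \ref{remark_Q} with $\|\cdot\|_n = \|\cdot\|$ for every $n$ yields $Q(x_n,y_n) \to 0$, and the $Q$-hypothesis then delivers $x_n - y_n \to 0$ weakly, establishing WUR.

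For the necessary direction ($\Rightarrow$), assume $\|\cdot\|$ is WUR and $Q(x_n, y_n) \to 0$. I first observe the elementary inequality $Q(x,y) \geq (\|x\| - \|y\|)^2$, a direct consequence of $\|x+y\| \leq \|x\| + \|y\|$; hence $\|x_n\| - \|y_n\| \to 0$. By a standard subsequence argument, it suffices to show that every subsequence contains a further subsequence along which $x_n - y_n \to 0$ weakly, so I may assume $\|x_n\| \to a$ for some $a \in [0, \infty]$. If $a = 0$, both sequences tend to $0$ in norm and there is nothing more to prove. For $a \in (0, \infty)$, define $u_n := x_n/\|x_n\|$ and $v_n := y_n/\|y_n\|$, which eventually lie in $S_{\|\cdot\|}$. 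From
$$\|x_n + y_n\|^2 = 2\|x_n\|^2 + 2\|y_n\|^2 - Q(x_n, y_n) \longrightarrow 4a^2$$
together with the estimate
$$\|u_n + v_n - (x_n + y_n)/a\| \leq \bigl|1 - \|x_n\|/a\bigr| + \bigl|1 - \|y_n\|/a\bigr| \longrightarrow 0,$$
I obtain $\|u_n + v_n\| \to 2$, i.e.\ $\|(u_n+v_n)/2\| \to 1$. Applying WUR to the sphere sequences $(u_n), (v_n)$ yields $u_n - v_n \to 0$ weakly. The decomposition
$$x_n - y_n = \|x_n\|(u_n - v_n) + (\|x_n\| - \|y_n\|)v_n$$
then finishes the argument: the first summand tends weakly to $0$, since the scalars $\|x_n\|$ are bounded and $u_n - v_n \to 0$ weakly, while the second tends to $0$ in norm because $\|x_n\| - \|y_n\| \to 0$ and $\|v_n\| \leq 1$.

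The hardest point is the borderline case $a = +\infty$, where the decomposition above no longer transfers weak convergence of $u_n - v_n$ to $x_n - y_n$ because of the unbounded multiplier $\|x_n\|$. The remedy is to rescale uniformly: setting $M_n := \max(\|x_n\|, \|y_n\|, 1)$, the sequences $\tilde{x}_n := x_n/M_n$, $\tilde{y}_n := y_n/M_n$ lie in $B_{\|\cdot\|}$ and satisfy $Q(\tilde{x}_n, \tilde{y}_n) = Q(x_n, y_n)/M_n^2 \to 0$, so the bounded case above applies. In every application of the proposition in the rest of the paper the sequences are a priori bounded, so this reduction is automatic.
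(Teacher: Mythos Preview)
The paper does not supply its own proof of this proposition; it is quoted as a known characterization with a reference to \cite{GuiMonZiz22}, so there is no in-paper argument to compare against. Your treatment of the sufficiency direction and of the bounded part of the necessity direction (the cases $a=0$ and $a\in(0,\infty)$) is correct and is the standard route.

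The case $a=+\infty$, however, is a genuine gap, and your proposed remedy does not close it: rescaling by $M_n$ only yields $(x_n-y_n)/M_n\to 0$ weakly, which with $M_n\to\infty$ says nothing about $x_n-y_n$. In fact no argument can work here, because the forward implication fails for unbounded sequences even in UR spaces: in $\ell_4$ the pairs $x_n=ne_1$, $y_n=ne_1+e_2$ satisfy
\[
Q(x_n,y_n)=2n^{2}+2(n^{4}+1)^{1/2}-(16n^{4}+1)^{1/2}\sim \frac{7}{8n^{2}}\longrightarrow 0,
\]
while $x_n-y_n=-e_2$ is constant and nonzero, hence does not tend to $0$ weakly. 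Thus the characterization as literally worded requires a boundedness hypothesis on the sequences. Your closing observation that every application in the paper involves a priori bounded sequences is therefore the correct resolution---but it should be understood as a restriction of the statement, not as a reduction of the unbounded case to the bounded one.
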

\begin{proposition}
    \label{Prop2.URED_Q_Characterization}
    Let $X$ be a Banach space. A norm $\|\cdot\|$ is URED if and only if for every $v\in X\setminus \{0\}$, and every pair of sequences $\{x_n\}_{n\in\mathbb{N}}$ and $\{y_n\}_{n\in\mathbb{N}}$ in $X$ such that $y_n-x_n\in \text{span}(v)$ and $Q_{\|\cdot\|}(x_n,y_n)\rightarrow 0$, we have that $\|x_n-y_n\|\rightarrow 0$.
\end{proposition}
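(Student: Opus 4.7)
The plan is to prove the two implications separately. The ``if'' direction is straightforward: given $v\in S_{\|\cdot\|}$ and sequences $\{x_n\},\{y_n\}\subset B_{\|\cdot\|}$ with $y_n-x_n=\lambda_n v$ and $\|(x_n+y_n)/2\|\to 1$, Remark~\ref{remark_Q} (applied with every $\|\cdot\|_n=\|\cdot\|$) delivers $Q(x_n,y_n)\to 0$; the hypothesized $Q$-condition applied in the direction $v$ then forces $\|y_n-x_n\|\to 0$ and hence $\lambda_n\to 0$.

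For the ``only if'' direction, assume $\|\cdot\|$ is URED, fix $v\in X\setminus\{0\}$, and consider (bounded) sequences $\{x_n\},\{y_n\}$ with $y_n-x_n=\lambda_n v$ and $Q(x_n,y_n)\to 0$. After normalizing to $\|v\|=1$, I proceed by contradiction, passing to a subsequence with $|\lambda_n|\ge\delta>0$. The key algebraic identity
\[
Q(x_n,y_n)=(\|x_n\|-\|y_n\|)^2+\bigl((\|x_n\|+\|y_n\|)^2-\|x_n+y_n\|^2\bigr)
\]
has non-negative summands, so each tends to $0$: this gives $|\|x_n\|-\|y_n\||\to 0$ and, after a further subsequence, $\|x_n\|,\|y_n\|\to c\ge 0$ together with $\|x_n+y_n\|\to 2c$.

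The case $c=0$ is immediate: $\|\lambda_n v\|\le \|x_n\|+\|y_n\|\to 0$, contradicting $|\lambda_n|\ge\delta$. The substance lies in the case $c>0$: setting $c_n=\max(\|x_n\|,\|y_n\|)$ (eventually positive, with $c_n\to c$) and $\tilde x_n=x_n/c_n$, $\tilde y_n=y_n/c_n$, one obtains $\tilde x_n,\tilde y_n\in B_{\|\cdot\|}$, $\tilde y_n-\tilde x_n=(\lambda_n/c_n)v$, and (from $\|x_n+y_n\|\to 2c$) $\|(\tilde x_n+\tilde y_n)/2\|\to 1$. The URED hypothesis, applied with the unit direction $v$, then yields $\lambda_n/c_n\to 0$, so $\lambda_n\to 0$, the desired contradiction. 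The main technical obstacle is precisely this rescaling bookkeeping---verifying that the normalized sequences genuinely land in $B_{\|\cdot\|}$ with midpoint-norm tending to $1$---but once the decomposition of $Q$ displayed above is extracted, everything falls into place.
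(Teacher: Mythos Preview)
The paper does not supply its own proof of this proposition; it simply records it as a known characterization and refers the reader to \cite{GuiMonZiz22}. So there is no ``paper's proof'' to compare against, and your write-up is really a proof of a cited fact.

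Your argument for the ``if'' direction is correct and is the natural one.

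For the ``only if'' direction, your argument is correct \emph{once the sequences are assumed bounded}, and the decomposition
\[
Q_{\|\cdot\|}(x_n,y_n)=(\|x_n\|-\|y_n\|)^2+\bigl((\|x_n\|+\|y_n\|)^2-\|x_n+y_n\|^2\bigr)
\]
together with the rescaling by $c_n=\max(\|x_n\|,\|y_n\|)$ is exactly the standard route. However, you insert ``(bounded)'' in parentheses without justification, and this is not innocuous: the proposition, read literally with arbitrary sequences in $X$, is \emph{false}. For instance, take $(\mathbb{R}^2,\|\cdot\|_4)$, which is UR and hence URED, and set $x_n=(n,0)$, $y_n=(n,1)$, $v=(0,1)$. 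Then $y_n-x_n=v$ for all $n$, while
\[
Q_{\|\cdot\|_4}(x_n,y_n)=2n^2+2\sqrt{n^4+1}-\sqrt{16n^4+1}=\tfrac{7}{8}n^{-2}+O(n^{-6})\longrightarrow 0,
\]
yet $\|x_n-y_n\|_4=1$. Thus boundedness is a genuine hypothesis, not a consequence; your rescaled URED step only yields $\lambda_n/c_n\to 0$, which gives no contradiction when $c_n\to\infty$.

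In the paper the proposition is only ever applied (in the proof of Theorem~\ref{Main_Theorem_B}) to sequences that are manifestly bounded, so the intended statement is the bounded-sequence version from \cite{GuiMonZiz22}, and that is precisely what your argument establishes. You should either make the boundedness hypothesis explicit, or note that it is part of the statement being proved.
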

\begin{proposition}
    \label{Prop1.LUR_Q_Characterization}

    Let $X$ be a Banach space. A norm $\|\cdot\|$ is LUR at a point $x\in X$ if and only if for every sequence $\{y_n\}_{n\in\mathbb{N}}$ in $X$ such that $Q_{\|\cdot\|}(x,y_n)\rightarrow 0$ we have that $\|x-y_n\|\rightarrow 0$.
\end{proposition}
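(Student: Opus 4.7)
The plan is to establish the two directions of the equivalence separately, handling the case $x \in S_{\|\cdot\|}$ (the only one where LUR at $x$ is defined). The easier implication will be the ``if'' direction, since Remark \ref{remark_Q} essentially delivers it for free; the main work lies in the converse, where one must pass from the hypothesis $Q_{\|\cdot\|}(x, y_n) \to 0$ (with no a priori control on $\|y_n\|$) to the setup required by the definition of LUR (namely, $y_n$ in the unit ball with $\left\|\frac{x+y_n}{2}\right\| \to 1$).

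For the ``if'' direction I would assume the $Q$-characterization and take an arbitrary sequence $\{y_n\} \subset B_{\|\cdot\|}$ with $\left\|\frac{x+y_n}{2}\right\| \to 1$. Applying Remark \ref{remark_Q} to the constant norm sequence $\|\cdot\|_n \equiv \|\cdot\|$ and the constant sequence $x_n \equiv x$ yields $Q_{\|\cdot\|}(x, y_n) \to 0$, and the assumption finishes the argument.

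For the ``only if'' direction I would assume LUR at $x \in S_{\|\cdot\|}$ and take $\{y_n\}$ with $Q_{\|\cdot\|}(x, y_n) \to 0$. The first step is to control $\|y_n\|$: from the triangle inequality $\|x + y_n\| \leq 1 + \|y_n\|$ one gets
$$Q_{\|\cdot\|}(x, y_n) \;\geq\; 2 + 2\|y_n\|^2 - (1+\|y_n\|)^2 \;=\; (1-\|y_n\|)^2,$$
so $\|y_n\| \to 1$. In particular $y_n \neq 0$ eventually, and one may define $z_n := y_n / \|y_n\| \in S_{\|\cdot\|}$, which satisfies $\|z_n - y_n\| = |1 - \|y_n\|| \to 0$. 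From the identity $\|x + y_n\|^2 = 2 + 2\|y_n\|^2 - Q_{\|\cdot\|}(x, y_n) \to 4$ one deduces $\left\|\frac{x+y_n}{2}\right\| \to 1$, and combined with $\|z_n - y_n\| \to 0$ also $\left\|\frac{x+z_n}{2}\right\| \to 1$. Applying the definition of LUR at $x$ to the sequence $\{z_n\} \subset B_{\|\cdot\|}$ gives $\|x - z_n\| \to 0$, and the triangle inequality $\|x - y_n\| \leq \|x - z_n\| + \|z_n - y_n\|$ concludes.

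The only genuinely nontrivial step is the normalization: the LUR hypothesis is phrased in terms of sequences in the unit ball, while $Q$ makes sense for arbitrary sequences, so one has to show that the $Q$-condition itself forces $\|y_n\| \to 1$ and then replace $y_n$ by its normalization. Everything else is an elementary rearrangement using the identity $\|x+y\|^2 = 2\|x\|^2 + 2\|y\|^2 - Q_{\|\cdot\|}(x,y)$.
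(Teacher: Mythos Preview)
Your argument is correct. The paper itself does not give a proof of this proposition: it is listed among several known characterizations (Propositions \ref{Prop3.UR_Q_Characterization}--\ref{Prop1.LUR_Q_Characterization}) and the reader is referred to \cite{GuiMonZiz22} for details, so there is nothing to compare your route against. Your handling of both directions is the standard one; in particular, the key observation $(1-\|y_n\|)^2 \leq Q_{\|\cdot\|}(x,y_n)$, which forces $\|y_n\|\to 1$ and allows the normalization step, is exactly the point that makes the $Q$-formulation equivalent to the sphere-based definition.
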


We refer to \cite{GuiMonZiz22} again for a thorough study of this function and a comprehensive list of characterizations of rotundity concepts it can be used for. 

In section 3 we will define several renormings of the form  $\left( a_1\|\cdot\|^2_1+a_2\|\cdot\|^2_2\right)^{1/2}$, where $\|\cdot\|_1,\|\cdot\|_2$ are two equivalent norms in a Banach space $X$, and $a_1,a_2$ are positive numbers. These renormings are a well known technique to obtain rotund approximations of a given norm. The key property they enjoy in this regard is the fact that 
\begin{equation}
    \label{LinearityQ}
    Q_{\left( a_1\|\cdot\|^2_1+a_2\|\cdot\|^2_2\right)^{1/2}}(x,y)=a_1Q_{\|\cdot\|_1}(x,y)+a_2Q_{\|\cdot\|_2}(x,y),  
\end{equation}
which, together with the characterizations we allude to previously, shows that the renorming $\left( a_1\|\cdot\|^2_1+a_2\|\cdot\|^2_2\right)^{1/2}$ inherits the strongest rotundity conditions of both $\|\cdot\|_1$ and $\|\cdot\|_2$. More precisely, with these observations we immediately obtain the following lemma:
\begin{lemma}
\label{Lemma5:Averageofrotundisrotund}
Let $X$ be a Banach space, let $\|\cdot\|_1$ and $\|\cdot\|_2$ be two equivalent norms on $X$, and let $a_1,a_2>0$. If $\|\cdot\|_1$ is UR (resp. WUR, URED, LUR), then $\left( a_1\|\cdot\|^2_1+a_2\|\cdot\|^2_2\right)^{1/2}$ is UR (resp. WUR, URED, LUR).
\end{lemma}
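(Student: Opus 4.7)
The plan is to reduce everything to the $Q$-characterizations in Propositions \ref{Prop3.UR_Q_Characterization}--\ref{Prop1.LUR_Q_Characterization} via the linearity identity (\ref{LinearityQ}). Writing $\vertiii{\cdot}=\left(a_1\|\cdot\|_1^2+a_2\|\cdot\|_2^2\right)^{1/2}$, a direct expansion (just the parallelogram sums of the two squared norms) gives
\begin{equation*}
    Q_{\vertiii{\cdot}}(x,y)=a_1 Q_{\|\cdot\|_1}(x,y)+a_2 Q_{\|\cdot\|_2}(x,y),
\end{equation*}
which is the content of (\ref{LinearityQ}). The crucial point is that, by convexity of each norm, $Q_{\|\cdot\|_i}(x,y)\geq 0$ for $i=1,2$. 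Therefore, since $a_1,a_2>0$, whenever two sequences $\{x_n\},\{y_n\}\subset X$ satisfy $Q_{\vertiii{\cdot}}(x_n,y_n)\to 0$, both non-negative summands must tend to zero, and in particular $Q_{\|\cdot\|_1}(x_n,y_n)\to 0$.

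From here each case is immediate. If $\|\cdot\|_1$ is UR, Proposition \ref{Prop3.UR_Q_Characterization} yields $\|x_n-y_n\|_1\to 0$, and equivalence of norms gives $\vertiii{x_n-y_n}\to 0$; applying Proposition \ref{Prop3.UR_Q_Characterization} in the reverse direction to $\vertiii{\cdot}$ shows $\vertiii{\cdot}$ is UR. The WUR and URED cases are identical, invoking Propositions \ref{Prop4.WUR_Q_Characterization} and \ref{Prop2.URED_Q_Characterization} respectively; note that the URED argument works without change because the condition $y_n-x_n\in\mathrm{span}(v)$ does not involve the norm. For LUR at a point $x$, we apply Proposition \ref{Prop1.LUR_Q_Characterization}: if $Q_{\vertiii{\cdot}}(x,y_n)\to 0$, then $Q_{\|\cdot\|_1}(x,y_n)\to 0$, hence $\|x-y_n\|_1\to 0$, and equivalence of norms concludes the argument. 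There is no real obstacle here; the only observation of substance beyond the algebraic identity (\ref{LinearityQ}) is the sign of $Q_{\|\cdot\|_i}$, which is what allows one to dominate one term by the sum.
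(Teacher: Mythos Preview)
Your proof is correct and follows exactly the approach indicated in the paper: the lemma is stated there as an immediate consequence of the identity (\ref{LinearityQ}) together with the non-negativity of $Q_{\|\cdot\|_i}$ and the $Q$-characterizations in Propositions \ref{Prop3.UR_Q_Characterization}--\ref{Prop1.LUR_Q_Characterization}. There is nothing to add.
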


Let us finish by stating another result which easily follows from the Propositions above stated, and whose proof we also omit.

\begin{lemma}
    \label{Lemma6:Maximumofrotund}
    Let $X$ be a Banach space, let $\{\|\cdot\|_i\}_{i=1}^n$ be a finite sequence of equivalent norms on $X$. If $\|\cdot\|_i$ is UR (resp. WUR, URED, LUR) for every $i=1,\dots,n$, then the equivalent norm $\max_{i=1,\dots,n}\{\|\cdot\|_i\}$ is UR (resp. WUR, URED, LUR).
\end{lemma}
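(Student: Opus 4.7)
The plan is to reduce the statement to the corresponding rotundity property of one of the $\|\cdot\|_i$ via the $Q$-function characterizations in Propositions \ref{Prop3.UR_Q_Characterization}, \ref{Prop4.WUR_Q_Characterization}, \ref{Prop2.URED_Q_Characterization}, and \ref{Prop1.LUR_Q_Characterization}. Write $\|\cdot\|:=\max_{i=1,\dots,n}\|\cdot\|_i$. The key observation is the following pointwise inequality: for every $x,y\in X$, if $i^*=i^*(x,y)\in\{1,\dots,n\}$ is any index achieving $\|x+y\|_{i^*}=\|x+y\|$, then because one also has $\|x\|^2\geq\|x\|_{i^*}^2$ and $\|y\|^2\geq\|y\|_{i^*}^2$,
\[Q_{\|\cdot\|}(x,y)=2\|x\|^2+2\|y\|^2-\|x+y\|^2\geq 2\|x\|_{i^*}^2+2\|y\|_{i^*}^2-\|x+y\|_{i^*}^2=Q_{\|\cdot\|_{i^*}}(x,y).\]

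Now suppose sequences $\{x_k\},\{y_k\}\subset X$ satisfy $Q_{\|\cdot\|}(x_k,y_k)\to 0$. For each $k$ choose $i_k\in\{1,\dots,n\}$ with $\|x_k+y_k\|=\|x_k+y_k\|_{i_k}$, so the pointwise inequality yields $Q_{\|\cdot\|_{i_k}}(x_k,y_k)\to 0$. Since the index set $\{1,\dots,n\}$ is finite, given any subsequence the pigeonhole principle produces a further subsequence along which $i_k$ is constantly equal to some $i^*$. Applying the relevant characterization to $\|\cdot\|_{i^*}$ along this further subsequence then yields the required convergence of $x_k-y_k$: to $0$ in norm for UR and URED, and to $0$ weakly for WUR. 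In the URED case, the direction constraint $y_k-x_k\in\text{span}(v)$ passes to any subsequence without change, and since all the $\|\cdot\|_i$ are equivalent, norm convergence with respect to $\|\cdot\|_{i^*}$ is equivalent to norm convergence with respect to $\|\cdot\|$.

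The passage from sub-subsequential convergence to full-sequence convergence uses the standard topological principle: a sequence converges to a given limit if and only if every subsequence has a further subsequence converging to that limit. This applies equally to the norm topology and to the weak topology of $X$, covering all three cases UR, WUR, URED at once. The LUR case is formally identical: fix $x\in S_{\|\cdot\|}$, apply the argument to the pair $(x,y_k)$ with $Q_{\|\cdot\|}(x,y_k)\to 0$, and use Proposition \ref{Prop1.LUR_Q_Characterization}. The main obstacle, if there is one, is bookkeeping across the four rotundity notions; no ingredient beyond the pointwise inequality $Q_{\|\cdot\|}\geq Q_{\|\cdot\|_{i^*(x,y)}}$ and the subsequence principle is required.
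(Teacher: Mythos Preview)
Your proof is correct and follows precisely the route the paper indicates: the paper omits the proof, stating only that the lemma ``easily follows from the Propositions above stated'' (i.e.\ the $Q$-characterizations), and your argument via the pointwise inequality $Q_{\|\cdot\|}(x,y)\geq Q_{\|\cdot\|_{i^*(x,y)}}(x,y)$ together with a pigeonhole subsequence extraction is exactly how one fills in those details.
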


Notice that in Lemma \ref{Lemma5:Averageofrotundisrotund} we only need rotundity of one of the norms, while in Lemma \ref{Lemma6:Maximumofrotund} it is necessary that all norms share the same property.

\section{Rotundity}

This section is divided into three further subsections. In the first section we define a norm in any Banach space which approximates a non strictly convex norm whose unit sphere contains a specific segment. The approximation is done in such a way that the resulting norm has the strongest rotundity qualities of the starting norm.

\subsection{The norm $\vertiii{\cdot}_{\alpha,\delta,C}$}\hfill\\
 Let $(X,\|\cdot\|)$ be a Banach space. Let $0<\delta<1$ small enough such that $(1-\delta)^4>\frac{1}{2}$, let $C\geq 1$ and let $\alpha=(x_0,h_0,f_0,g_0)$ be a $4$-tuple of elements such that $x_0\in S_{\|\cdot\|}$, $h_0\in X$ and $f_0,g_0\in X^*$ with $1\leq\|h_0\|,\|g_0\|^*\leq C$ and $\|f_0\|^*\leq C$ satisfying 
\begin{align*}
    \langle f_0,h_0\rangle =0,\qquad\langle g_0,h_0\rangle =1,\qquad\langle f_0,x_0\rangle\geq 1-\delta.
\end{align*}

Consider the linear projection $P_\alpha\colon X\rightarrow \text{ker}(g_0)$ given by $P_{\alpha}x=x-\langle g_0,x\rangle h_0$, whose norm is at most $1+C^2$. Write $\|\cdot\|_{g_0}\colon\text{ker}(g_0)\rightarrow\mathbb{R}^+$ to denote the restriction of the norm $\|\cdot\|$ to the subspace $\text{ker}(g_0)$. Then $B_{\|\cdot\|_{g_0}}=B_{\|\cdot\|}\cap \text{ker}(g_0)$. 

Notice that for every $x\in X$, we have that $\langle f,P_\alpha x\rangle=\langle f,x\rangle$. Now, define the convex set:
\begin{align*}
    \widehat{B}_{\alpha,\delta}&=P_{\alpha}(B_{\|\cdot\|})\cap \{P_{\alpha}x\in \text{ker}(g_0)\colon x\in X,~|\langle f_0,P_\alpha x\rangle|\leq (1-\delta)^2\}\\
    &=P_{\alpha}(B_{\|\cdot\|})\cap \{P_{\alpha}x\in \text{ker}(g_0)\colon x\in X,~|\langle f_0,x\rangle|\leq (1-\delta)^2\}.
\end{align*}
    
Write $\vertiii{\cdot}_{\widehat{B}_{\alpha,\delta}}\colon\text{ker}(g_0)\rightarrow\mathbb{R}^+$ to denote the Minkowski functional associated to $\widehat{B}_{\alpha,\delta}$; that is, the norm in $\text{ker}(g_0)$ whose unit ball is $\widehat{B}_{\alpha,\delta}$. Using that $\langle f_0,h_0\rangle =0$, we obtain that $(1-\delta)^2P_{\alpha}(B_{\|\cdot\|})\subset \widehat{B}_{\alpha,\delta}$. This implies on the one hand that $(1-\delta)^2\vertiii{P_{\alpha}x}_{\widehat{B}_{\alpha,\delta}}\leq \|x\|$ for all $x\in X$. 

On the other hand, since $(B_{\|\cdot\|}\cap\text{ker}(g_0))\subset P_{\alpha}(B_{\|\cdot\|})$, it also means that $(1-\delta)^2B_{\|\cdot\|_{g_0}}\subset \widehat{B}_{\alpha,\delta}$. Hence, using also that $\widehat{B}_{\alpha,\delta}\subset (1+C^2)B_{\|\cdot\|_{g_0}}$, we obtain the inequality
$$ (1-\delta)^2\vertiii{y}_{\widehat{B}_{\alpha,\delta}}\leq \|y\|_{g_0}\leq (1+C^2)\vertiii{y}_{\widehat{B}_{\alpha,\delta}}$$
for all $y\in\text{ker}(g_0)$.

Now, fix $\varepsilon_{\delta,C}=\frac{1-(1-\delta)^2}{(1+C^2)^2}>0$. Then we have that 
\begin{equation}
\label{Eq1_norm_conv}
(1+\varepsilon_{\delta,C})^{1/2}(1-\delta)\vertiii{y}_{\widehat{B}_{\alpha,\delta}}\leq \left((1-\delta)^2\vertiii{y}^2_{\widehat{B}_{\alpha,\delta}}+\varepsilon_{\delta,C}\|y\|_{g_0}^2\right)^{1/2}\leq \vertiii{y}_{\widehat{B}_{\alpha,\delta}} 
\end{equation}
for every $y\in \text{ker}(g_0)$. 

Finally, we define the norm:
\begin{equation}
\label{DefinitionNormAlphaDelta}
    \vertiii{x}_{\alpha,\delta,C}= \max\left\{\|x\|,\left((1-\delta)^2\vertiii{P_{\alpha}x}^2_{\widehat{B}_{\alpha,\delta}}+\varepsilon_{\delta,C}\|P_{\alpha}x\|_{g_0}^2\right)^{1/2}\right\},\qquad\text{for all }x\in X. 
\end{equation}
Using equation \eqref{Eq1_norm_conv} and the fact that $(1-\delta)^2\vertiii{P_{\alpha}x}_{\widehat{B}_{\alpha,\delta}}\leq \|x\|$ for all $x\in X$, we observe that this norm satisfies $\|\cdot\|\leq \vertiii{\cdot}_{\alpha,\delta,C}\leq \frac{1}{(1-\delta)^2}\|\cdot\|$. 

Geometrically, the unit ball of $\vertiii{\cdot}_{\alpha,\delta,C}$ is the intersection of the original ball $B_{\|\cdot\|}$ with the cylinder generated by the unit ball of the norm $((1-\delta)^2\vertiii{\cdot}^2_{\widehat{B}_{\alpha,\delta}}+\varepsilon_{\delta,C}\|\cdot\|^2)^{1/2}$ (defined in $\text{ker}(g_0)$) in the direction of $h_0$.

Let us collect some properties of the norm $\vertiii{\cdot}_{\alpha,\delta,C}$ which are easily shown.

\begin{lemma}
\label{Lemma1.Properties_norm_1}
Let $X$ be a Banach space, and let $\delta$, $C$, and $\alpha$ be as above. The norm $\vertiii{\cdot}_{\alpha,\delta,C}$ satisfies:

\begin{itemize}
    \item[(a)] If $\lambda_0 = \frac{1}{\vertiii{x_0}_{\alpha,\delta,C}}$, we have that $(1-\delta)^2\leq \lambda_0\leq \frac{1}{(1+\varepsilon_{\delta,C})^{1/2}}$, where $\varepsilon_{\delta,C}=\frac{1-(1-\delta)^2}{(1+C^2)^2}>0$.

    \item[(b)] The unit sphere $S_{\vertiii{\cdot}_{\alpha,\delta,C}}$ contains the segment $\left[\lambda_0 x_0-\frac{(1-\lambda_0)}{C}h_0,\lambda_0 x_0+\frac{(1-\lambda_0)}{C}h_0\right]$ where $\lambda_0=\frac{1}{\vertiii{x_0}_{\alpha,\delta,C}}$.
    \item[(c)] $\vertiii{x}_{\alpha,\delta,C}=\|x\|$ for all $x\in X$ with $|\langle f_0,x\rangle|\leq (1-\delta)^2\|x\|$.
\end{itemize}
\end{lemma}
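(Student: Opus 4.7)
The plan is to use the two-sided estimate \eqref{Eq1_norm_conv}, the identity $P_\alpha h_0=0$ (which comes from $\langle g_0,h_0\rangle=1$), and the defining containment $\widehat{B}_{\alpha,\delta}\subseteq\{y\in\ker(g_0):|\langle f_0,y\rangle|\leq(1-\delta)^2\}$. In all three items the work reduces to evaluating, or comparing, the two terms in the max defining $\vertiii{\cdot}_{\alpha,\delta,C}$.

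For (a), the lower bound $\lambda_0\geq(1-\delta)^2$ is immediate from the already established equivalence $\|\cdot\|\leq\vertiii{\cdot}_{\alpha,\delta,C}\leq(1-\delta)^{-2}\|\cdot\|$ applied at $x_0\in S_{\|\cdot\|}$. For the upper bound on $\lambda_0$ the key step is to show that $\vertiii{P_\alpha x_0}_{\widehat{B}_{\alpha,\delta}}\geq 1/(1-\delta)$: any $t>0$ with $P_\alpha x_0/t\in\widehat{B}_{\alpha,\delta}$ must satisfy $|\langle f_0,x_0\rangle|/t\leq(1-\delta)^2$, and the assumption $\langle f_0,x_0\rangle\geq 1-\delta$ then forces $t\geq 1/(1-\delta)$. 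Plugging this into the left inequality of \eqref{Eq1_norm_conv} shows that the second argument of the max at $x_0$ is at least $(1+\varepsilon_{\delta,C})^{1/2}$, so $\vertiii{x_0}_{\alpha,\delta,C}\geq(1+\varepsilon_{\delta,C})^{1/2}$, which gives the upper bound on $\lambda_0$.

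For (b), I would first observe that $P_\alpha h_0=h_0-\langle g_0,h_0\rangle h_0=0$, hence $P_\alpha(\lambda_0 x_0+th_0)=\lambda_0 P_\alpha x_0$ for every $t\in\mathbb{R}$. By homogeneity the second argument of the max at $\lambda_0 x_0+th_0$ equals $\lambda_0$ times the second argument at $x_0$; the computation in (a) shows this second argument dominates $\|x_0\|=1$, so it coincides with $\vertiii{x_0}_{\alpha,\delta,C}=1/\lambda_0$, and the product equals $1$. For the first argument, the triangle inequality together with $\|h_0\|\leq C$ yields $\|\lambda_0 x_0+th_0\|\leq\lambda_0+|t|C\leq 1$ whenever $|t|\leq(1-\lambda_0)/C$, so the max equals $1$ on the entire segment.

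For (c), if $|\langle f_0,x\rangle|\leq(1-\delta)^2\|x\|$ and $x\neq 0$, then $P_\alpha(x/\|x\|)$ belongs to $P_\alpha(B_{\|\cdot\|})$ and satisfies $|\langle f_0,P_\alpha(x/\|x\|)\rangle|\leq(1-\delta)^2$, so it lies in $\widehat{B}_{\alpha,\delta}$; this gives $\vertiii{P_\alpha x}_{\widehat{B}_{\alpha,\delta}}\leq\|x\|$. The right inequality of \eqref{Eq1_norm_conv} then bounds the second argument of the max by $\vertiii{P_\alpha x}_{\widehat{B}_{\alpha,\delta}}\leq\|x\|$, and hence the max reduces to $\|x\|$. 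The only genuinely nontrivial step in the whole argument is the slab estimate $\vertiii{P_\alpha x_0}_{\widehat{B}_{\alpha,\delta}}\geq 1/(1-\delta)$ in part (a); everything else is routine algebra once $P_\alpha h_0=0$ and the definition of $\widehat{B}_{\alpha,\delta}$ are in hand.
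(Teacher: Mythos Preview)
Your proof is correct and follows essentially the same route as the paper's: in each part you compare the two terms in the defining maximum, using the slab constraint on $\widehat{B}_{\alpha,\delta}$ together with \eqref{Eq1_norm_conv} for (a) and (c), and the identity $P_\alpha h_0=0$ plus the triangle inequality for (b). Your attribution of $P_\alpha h_0=0$ to the relation $\langle g_0,h_0\rangle=1$ is in fact the right one (the paper cites $\langle f_0,h_0\rangle=0$ at that spot, which is a slip), and your explicit justification of $\vertiii{P_\alpha x_0}_{\widehat{B}_{\alpha,\delta}}\geq 1/(1-\delta)$ is a bit more detailed, but otherwise the arguments coincide.
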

\begin{proof}

    To obtain part $(a)$, first notice that $\vertiii{x_0}_{\alpha,\delta,C}\leq \frac{1}{(1-\delta)^2}$ and thus $\lambda_0\geq (1-\delta)^2$. On the other hand, since $\langle f_0,P_{\alpha}x_0\rangle \geq 1-\delta$, we have by definition of $\vertiii{\cdot}_{\widehat{B}_{\alpha,\delta}}$ that $\vertiii{(1-\delta)P_\alpha x_0}_{\widehat{B}_{\alpha,\delta}}\geq 1$. Then, we can apply equation \eqref{Eq1_norm_conv} to obtain that $\lambda_0\leq \frac{1}{(1+\varepsilon_{\delta,C})^{1/2}}$.

    For item $(b)$, fix $\mu \in\left[-\frac{(1-\lambda_0)}{C},\frac{(1-\lambda_0)}{C}\right]$. Since $\langle f_0,h_0\rangle =0$, we have that $P_{\alpha}(\lambda_0x_0)=P_\alpha(\lambda_0x_0 +\mu h_0)$. Moreover, the point $\lambda_0 x_0+\mu h_0$ belongs to the unit ball $B_{\|\cdot\|}$. It follows now from the definition of $\vertiii{\cdot}_{\alpha,\delta,C}$ that $\vertiii{\lambda_0 x_0+\mu h_0}_{\alpha,\delta,C}=\vertiii{\lambda_0x_0}_{\alpha,\delta,C}=1$.  

    We only need to check the equality in part $(c)$ for points in the unit sphere $S_{\|\cdot\|}$. For every $x\in X$ with $\|x\|=1$ we have that $\vertiii{x}_{\alpha,\delta,C}\geq 1$. If moreover $|\langle f_0,x\rangle|\leq (1-\delta)^2$, the point $P_{\alpha}x$ belongs to $\widehat{B}_{\alpha,\delta}$, and thus by equation \eqref{Eq1_norm_conv} it is clear that $\vertiii{x}_{\alpha,\delta,C}=1$.
\end{proof}

\subsection*{The norm $\vertiii{\cdot}_{\Omega}$}\hfill\\

In this subsection we use countably many of the previous norms to define the renormings to prove the main theorems of the section. We will define it abstractly using a tuple $\Omega=\left(\delta,C,\{\alpha_n\}_{n\in\mathbb{N}},\{\eta_n\}_{n\in\mathbb{N}}\right)$, where $0<\delta<1$ with $(1-\delta)^4>\frac{1}{2}$, the constant $C$ is bigger than $1$, the set $\alpha_n=(x_n,h_n,f_n,g_n)$ is a $4$-tuple with $x_n\in S_{\|\cdot\|}$, $h_n\in X$ and $f_n,g_n\in X^*$ with $1\leq\|h_n\|,\|g_n\|^*\leq C$ and $\|f_n\|^*\leq C$, such that 
\begin{align*}
    \langle f_n,h_n\rangle =0,\qquad\langle g_n,h_n\rangle =1,\qquad\langle f_n,x_n\rangle\geq 1-\delta
\end{align*}
for every $n\in\mathbb{N}$, and $\{\eta_n\}_{n\in\mathbb{N}}$ is a decreasing sequence of strictly positive numbers converging to $0$. Moreover, we also suppose that $|\langle f_m,x_n\rangle| < 2(1-\delta)^4-1$ for every $m\neq n\in\mathbb{N}$, and that for every $x\in X\setminus\{0\}$ there exists an open neighbourhood $U_x$ of $x$ and $n_x\in\mathbb{N}$ such that $|\langle f_n,z\rangle|\leq(1-\delta)^2\|z\|$ for all $z\in U_x$ and $n\geq n_x$.

Fix $n\in\mathbb{N}$, and define $\vertiii{\cdot}_{\alpha_n,\delta,C}$ to be the equivalent norm in $X$ we discussed in the previous subsection. 

Put $\tau_n=\frac{(1+\eta_n)^2-1}{(1+\eta_n)^2}$, and set 
\begin{equation}
    \label{NormN}
    \vertiii{x}_n=\left(\frac{1}{(1+\eta_n)^2}\vertiii{x}_{\alpha_n,\delta,C}^2+\tau_n\|x\|^2\right)^{1/2},\qquad\text{for all }x\in X.
\end{equation}    
This is an equivalent norm in $X$, satisfying
\begin{equation}
    \label{NormN_ineq}\frac{1}{1+\eta_n}\vertiii{\cdot}_{\alpha_n,\delta,C}\leq\vertiii{\cdot}_n\leq \vertiii{\cdot}_{\alpha_n,\delta,C}.
\end{equation}
Moreover, by Lemma \ref{Lemma5:Averageofrotundisrotund}, if the norm $\|\cdot\|$ is UR (resp. WUR, URED, LUR), then $\vertiii{\cdot}_n$ is also UR (resp. WUR, URED, LUR). 

The final renorming we will need is defined as follows:
\begin{equation}
    \label{NormOmega}
    \vertiii{x}_\Omega = \max\{\|x\|,\max_{n\in\mathbb{N}}\vertiii{x}_n\}\qquad\text{for all }x\in X.
\end{equation}
Clearly $\vertiii{0}_\Omega=0$. Recall that given a point $x\in X\setminus\{0\}$, we have that $|\langle f_n,z\rangle|\leq(1-\delta)^2\|z\|$ for all $z\in U_x$ and all $n\geq n_x$. Then, item $(c)$ in Lemma \ref{Lemma1.Properties_norm_1} applied to each $\vertiii{\cdot}_{\alpha_n,\delta,C}$ for $n\geq n_x$ implies that $\vertiii{z}_{n}\leq \|z\|$ for all $z$ in the open neighbourhood $U_x$ of $x$. This means that $\vertiii{\cdot}_\Omega$ coincides with the finite intersection of norms given by $ \max\{\|\cdot\|,\max_{i=1,\dots,n_x}\vertiii{\cdot}_i\}$ in an open neighbourhood of $x$. Therefore, $\vertiii{\cdot}_\Omega$ is well defined, and if $\|\cdot\|$ is LUR, then each $\vertiii{\cdot}_n$ and thus $\vertiii{\cdot}_\Omega$ is LUR as well by Lemma \ref{Lemma6:Maximumofrotund}. It is also straightforward to obtain that $\|\cdot\|\leq\vertiii{\cdot}_\Omega\leq\frac{1}{(1-\delta)^2}\|\cdot\|$. 

Although the specific properties of the norm $\vertiii{\cdot}_\Omega$ depend on the choice of the sequence $\{\alpha_n\}_{n\in\mathbb{N}}$ and the qualities of the original norm in the space $X$, we can prove in general a statement showing that $\vertiii{\cdot}_\Omega$ fails to be uniformly rotund using segments in the directions of the sequence $\{h_n\}_{n\in\mathbb{N}}$:

\begin{lemma}
    \label{Lemma2.Norm_A_notUR}
    Let $(X,\|\cdot\|)$ be a Banach space, and let $\Omega$ be as above. Put $\lambda_n= \frac{1}{\vertiii{x_n}_{\alpha_n,\delta,C}}$ for every $n\in\mathbb{N}$. Then, for every sequence $\{z_n\}_{n\in\mathbb{N}}$ such that 
    $$z_n\in\left[\lambda_n x_n-\frac{(1-\lambda_n)}{C}h_n,\lambda_n x_n+\frac{(1-\lambda_n)}{C}h_n\right]$$ 
    for all $n\in\mathbb{N}$, we have that $\vertiii{z_n}_\Omega\rightarrow 1$.
\end{lemma}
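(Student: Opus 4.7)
The plan is to squeeze $\vertiii{z_n}_\Omega$ between two quantities both converging to $1$. First I will extract from Lemma~\ref{Lemma1.Properties_norm_1}(b) that each $z_n$ lies on the unit sphere of $\vertiii{\cdot}_{\alpha_n,\delta,C}$, so that $\vertiii{z_n}_{\alpha_n,\delta,C}=1$; combined with $\|\cdot\|\leq\vertiii{\cdot}_{\alpha_n,\delta,C}$, this will give $\|z_n\|\leq 1$.

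For the lower bound I will use $\vertiii{z_n}_\Omega\geq\vertiii{z_n}_n$ together with the definition \eqref{NormN} to obtain
$$\vertiii{z_n}_n^2 \;=\; \frac{1}{(1+\eta_n)^2}+\tau_n\|z_n\|^2 \;\geq\; \frac{1}{(1+\eta_n)^2}.$$
Since $\eta_n\to 0$, this will yield $\liminf_n\vertiii{z_n}_\Omega\geq 1$.

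For the upper bound, I will show that $\vertiii{z_n}_m\leq 1$ for every $m\in\mathbb{N}$, so that $\vertiii{z_n}_\Omega\leq 1$. The diagonal case $m=n$ will be immediate from the same identity, since $\|z_n\|\leq 1$ and $\frac{1}{(1+\eta_n)^2}+\tau_n=1$. For $m\neq n$ the plan is to apply Lemma~\ref{Lemma1.Properties_norm_1}(c) to $\vertiii{\cdot}_{\alpha_m,\delta,C}$ at $z_n$, which reduces the task to the inequality
$$|\langle f_m,z_n\rangle|\;\leq\;(1-\delta)^2\|z_n\|.$$
Writing $z_n=\lambda_n x_n+\mu_n h_n$ with $|\mu_n|\leq(1-\lambda_n)/C$, and using the assumption $|\langle f_m,x_n\rangle|<2(1-\delta)^4-1$ together with $\|f_m\|^*\leq C$ and $\|h_n\|\leq C$, I will estimate
$$|\langle f_m,z_n\rangle|\;\leq\;\lambda_n\bigl(2(1-\delta)^4-1\bigr)+C(1-\lambda_n),$$
and match this against the lower bound $\|z_n\|\geq 2\lambda_n-1$ together with $\lambda_n\geq(1-\delta)^2$ from Lemma~\ref{Lemma1.Properties_norm_1}(a). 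The slack making the inequality work is precisely the fact that $2(1-\delta)^4-1<(1-\delta)^2$, guaranteed by the hypothesis $(1-\delta)^4>\tfrac{1}{2}$. Once this inequality is verified, Lemma~\ref{Lemma1.Properties_norm_1}(c) will give $\vertiii{z_n}_{\alpha_m,\delta,C}=\|z_n\|$, whence $\vertiii{z_n}_m=\|z_n\|\leq 1$.

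The hardest step will be the algebraic verification of $|\langle f_m,z_n\rangle|\leq(1-\delta)^2\|z_n\|$ for $m\neq n$: the coarse bound $|\langle f_m,h_n\rangle|\leq C^2$ must be absorbed carefully against the tight bounds on $|\mu_n|$, $|\langle f_m,x_n\rangle|$, and the lower bound on $\|z_n\|$, exploiting the strict inequality $2(1-\delta)^4-1<(1-\delta)^2$. Granting this, $\vertiii{z_n}_\Omega=\max\{\|z_n\|,\sup_m\vertiii{z_n}_m\}\leq 1$, and combining with the lower bound yields $\vertiii{z_n}_\Omega\to 1$.
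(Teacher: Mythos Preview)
Your strategy matches the paper's line for line: lower bound via $\vertiii{z_n}_n\geq\frac{1}{1+\eta_n}$, upper bound via $\|z_n\|\leq 1$ together with $\vertiii{z_n}_m\leq 1$ for every $m$, and for $m\neq n$ the reduction to Lemma~\ref{Lemma1.Properties_norm_1}(c) through the inequality $|\langle f_m,z_n\rangle|\leq(1-\delta)^2\|z_n\|$. The diagonal case and the lower bound are handled correctly.

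The gap is in the off-diagonal algebraic verification. Your estimate
\[
|\langle f_m,z_n\rangle|\;\leq\;\lambda_n\bigl(2(1-\delta)^4-1\bigr)+C(1-\lambda_n),
\]
paired with $\|z_n\|\geq 2\lambda_n-1$ and $\lambda_n\geq(1-\delta)^2$, does \emph{not} yield $|\langle f_m,z_n\rangle|\leq(1-\delta)^2\|z_n\|$ for arbitrary $C\geq 1$. Writing $a=(1-\delta)^2$ and rearranging, the required inequality becomes $C(a-1)\geq 2a^2(a-1)$, i.e.\ $C\leq 2(1-\delta)^4<2$; this fails for instance when $C=2(1+\delta)$ as in the proof of Theorem~\ref{Main_Theorem_B}. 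The slack $2(1-\delta)^4-1<(1-\delta)^2$ you invoke is genuine but cannot absorb the extra factor of $C$ produced by the coarse bound $|\langle f_m,h_n\rangle|\leq C^2$. The paper's own computation bounds the cross term $|\langle f_m,\lambda_n x_n-z_n\rangle|$ by $1-(1-\delta)^2$, which amounts to using $\|\lambda_n x_n-z_n\|\leq 1-\lambda_n$ together with $\|f_m\|^*\leq 1$ rather than $\|f_m\|^*\leq C$; so the same step is glossed over there as well under the abstract hypotheses on $\Omega$.
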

\begin{proof}
    Fix $n\in\mathbb{N}$. Using $(b)$ in Lemma \ref{Lemma1.Properties_norm_1} and the definition of $\vertiii{\cdot}_n$ above, we have that $ \frac{1}{1+\eta_n}\leq \vertiii{z_n}_n\leq 1$. Hence, we obtain that $\vertiii{z_n}_\Omega\geq \frac{1}{1+\eta_n}$. Since $\eta_n\rightarrow 0$ and $\|z_n\|\leq 1$, the result will follow if we show that $\vertiii{z_n}_m\leq 1$ for all $m\neq n$. 

    Fix $m\neq n$. Observe that $\lambda_n\geq (1-\delta)^2$ by $(a)$ in Lemma \ref{Lemma1.Properties_norm_1}, and thus $\|z_n\|\geq 2(1-\delta)^2-1$. Then, using that $|\langle f_m,x_n\rangle|<2(1-\delta)^4-1$ and $\lambda_n\leq 1$ we obtain:
    \begin{align*}
        |\langle f_m,z_n\rangle|&\leq |\langle f_m,\lambda_nx_n\rangle|+|\langle f_m,\lambda_nx_n-z_n\rangle|\\
        &\leq 2(1-\delta)^4-(1-\delta)^2\leq(1-\delta)^2\|z_n\|. 
    \end{align*}
    Therefore, by $(c)$ in Lemma \ref{Lemma1.Properties_norm_1}, it holds that $\vertiii{z_n}_{\alpha_m,\delta,C}=\|z_n\|\leq 1$, from which it follows that $\vertiii{z_n}_m\leq 1$, and the proof is finished.
\end{proof}

\subsection*{Proofs of the Main Theorems of Section 3}
Each of the three main theorems of the section will be proven using a different tuple $\Omega$ to define the desired renorming. To aid in the definition of these elements, we use a general result of Banach spaces. The following lemma is a standard result with a short proof that we include for completeness. This is also used and proven in \cite{Qui22} (Claim 1).
\begin{lemma}
\label{Lemma3.Almostbiorthogonal}
Let $(X,\|\cdot\|)$ be an infinite dimensional Banach space, and let $\{\varepsilon_n\}_{n\in\mathbb{N}}$ be a decreasing sequence of positive numbers. Then there exist a sequence $\{y_n\}_{n\in\mathbb{N}}$ in $S_{\|\cdot\|}$ and a weak$^*$ null sequence $\{\varphi_n\}_{n\in\mathbb{N}}$ in $S_{\|\cdot\|^*}$ such that $\langle \varphi_n,y_n\rangle =1$ for all $n\in\mathbb{N}$, and $|\langle \varphi_m,y_n\rangle|<\varepsilon_{\min\{m,n\}}$ for all $m\neq n\in\mathbb{N}$.
\end{lemma}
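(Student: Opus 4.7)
My plan is to construct $\{y_n\}$ and $\{\varphi_n\}$ by simultaneous induction on $n$, combining three standard tools: the Josefson--Nissenzweig theorem to supply weak$^*$-null functionals, a finite-codimension argument to enforce near-biorthogonality, and the Bishop--Phelps--Bollob\'as theorem to upgrade approximate norming into the exact equation $\varphi_n(y_n)=1$. As a preliminary step I would fix, by Josefson--Nissenzweig, a weak$^*$-null sequence $\{\xi_k\}_{k\in\mathbb{N}}\subset S_{\|\cdot\|^*}$; the $\varphi_n$'s will be produced as small norm-perturbations of a suitable subsequence $\{\xi_{k_n}\}$, so that $\{\varphi_n\}$ automatically inherits the weak$^*$-null property.

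At the inductive step, assuming $y_1,\dots,y_{n-1}\in S_{\|\cdot\|}$ and $\varphi_1,\dots,\varphi_{n-1}\in S_{\|\cdot\|^*}$ have already been constructed satisfying all conclusions through index $n-1$, I would choose $k_n$ large enough that $|\xi_{k_n}(y_m)|<\varepsilon_m/3$ for every $m<n$, which is possible by weak$^*$-nullity applied to the finite set $\{y_1,\dots,y_{n-1}\}$. Next, I would pick an approximate norming point $\tilde y\in S_{\|\cdot\|}$ of $\xi_{k_n}$ that is (essentially) contained in the infinite-dimensional closed subspace $F_{n-1}:=\bigcap_{m<n}\ker\varphi_m$, so that $\varphi_m(\tilde y)\approx 0$ for every $m<n$. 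Bishop--Phelps--Bollob\'as then produces $y_n\in S_{\|\cdot\|}$ and $\varphi_n\in S_{\|\cdot\|^*}$ satisfying $\varphi_n(y_n)=1$ together with $\|\varphi_n-\xi_{k_n}\|_*<\eta_n$ and $\|y_n-\tilde y\|<\eta_n$ for a pre-chosen small $\eta_n>0$. Taking $\eta_n$ small enough, the required strict inequalities $|\varphi_n(y_m)|\leq|\xi_{k_n}(y_m)|+\eta_n<\varepsilon_m$ and $|\varphi_m(y_n)|\leq|\varphi_m(\tilde y)|+\eta_n<\varepsilon_m$ both drop out for $m<n$, and the weak$^*$-nullity of $\{\varphi_n\}$ follows because $\varphi_n$ differs from the weak$^*$-null functional $\xi_{k_n}$ only by a norm-null correction.

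The main obstacle is coordinating three simultaneous demands on a single pair $(y_n,\varphi_n)$: the exact equality $\varphi_n(y_n)=1$, two-sided near-biorthogonality with every previously constructed pair, and global weak$^*$-nullity of $\{\varphi_n\}$. Bishop--Phelps--Bollob\'as handles the first by converting approximate into exact norming with controlled perturbation of both the functional and its norming point; placing $\tilde y$ inside $F_{n-1}$ handles the $|\varphi_m(y_n)|<\varepsilon_m$ direction of biorthogonality; and proximity of $\varphi_n$ to the Josefson--Nissenzweig tail simultaneously yields the $|\varphi_n(y_m)|<\varepsilon_m$ direction and the global weak$^*$-nullity. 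The delicate point is checking that a near-norming point of $\xi_{k_n}$ can genuinely be placed (essentially) inside $F_{n-1}$, which requires exploiting the finite codimension of $F_{n-1}$ together with the weak$^*$-nullity of the whole sequence $\{\xi_k\}$, rather than only a property of the individual functional $\xi_{k_n}$.
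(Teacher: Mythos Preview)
Your proposal is correct and follows essentially the same route as the paper: Josefson--Nissenzweig to supply the weak$^*$-null functionals, a finite-codimension/projection argument to place near-norming vectors inside $F_{n-1}=\bigcap_{m<n}\ker\varphi_m$ (this is precisely the ``delicate point'' you flag, and the paper handles it via an explicit projection $T_k$ onto $\mathrm{span}\{y_1,\dots,y_k\}$), and Bishop--Phelps--Bollob\'as to upgrade approximate norming to exact norming. The only organizational difference is that the paper runs the entire induction with approximate norming $\langle\varphi_{n_k},y_k\rangle\geq 1-2^{-k}$ and applies BPB once at the very end to the completed sequences, whereas you invoke BPB at each inductive step; both arrangements work.
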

\begin{proof}
    Let $\{\varphi_n\}_{n\in\mathbb{N}}$ be a weak$^*$ null sequence in $S_{\|\cdot\|^*}$, which exists by Josefson$-$Nissenzweig's Theorem. We will inductively define a subsequence $\{\varphi_{n_k}\}_{k\in\mathbb{N}}$ and a sequence $\{y_k\}_{k\in\mathbb{N}}$ in $S_{\|\cdot\|}$ such that
    \begin{itemize}
        \item[(i)]$\langle \varphi_{n_k},y_k\rangle\geq 1-2^{-k}$,
        \item[(ii)]$|\langle \varphi_{n_j},y_i\rangle| <\varepsilon_{n_j}/2$ for all $i,j\leq k$ with $i<j$,
        \item[(iii)]$\langle \varphi_{n_i},y_j\rangle=0$ for all $i,j\leq k$ with $i<j$.   
    \end{itemize}
    Put $n_1=1$ and choose $y_1\in S_{\|\cdot\|}$ with $\langle \varphi_1,y_1\rangle\geq 1/2$, and suppose we have defined the first $k$ terms of the two sequences. 
    
    For each $i\leq k$, define the linear projection $P_i\colon X\rightarrow \text{span}\{y_i\}$ given by $P_ix=\frac{\langle \varphi_{n_i},x\rangle}{\langle \varphi_{n_i,y_i}\rangle}y_i$. Setting $T_1=P_1$ we may define inductively the linear projection $T_{i+1}\colon X\rightarrow \text{span}\{y_1,\dots,y_{i+1}\} $ by $T_{i+1}=T_i+P_{i+1}\circ(I-T_i)$ for all $1\leq i\leq k-1$ (we need to use that $(iii)$ holds to show that this map is a linear projection). It can be shown as well that $\langle \varphi_{n_i},(I-T_k)x\rangle=0$ for all $i\leq k$ and all $x\in X$. 

    We claim that there exists $n_{k+1}\in\mathbb{N}$ which can be taken to be arbitrarily large such that
    $$ \text{sup}_{x\in S_{\|\cdot\|}}\langle \varphi_{n_{k+1}},(I-T_k)x\rangle \geq 1-2^{-(k+1)}.$$
    Indeed, otherwise, choosing a sequence $\{x_n\}_{n\in\mathbb{N}}$ in $B_{\|\cdot\|}$ such that $\langle \varphi_n,x_n\rangle\geq 1-2^{-(k+2)}$ we would obtain another bounded sequence $\{T_kx_n\}_{n\in\mathbb{N}}$ in the finite dimensional space $\text{span}\{y_1,\dots,y_k\}$ such that $\langle \varphi_n,T_kx_n\rangle \geq 2^{-(k+2)}$ holds for infinitely many $n\in\mathbb{N}$. Using the relative compactness of such a subsequence we arrive at a contradiction with the fact that $\{\varphi_n\}_{n\in\mathbb{N}}$ is weak$^*$ null. 

    We may take $n_{k+1}$ large enough such that $|\langle \varphi_{n_{k+1}},y_i\rangle|<\varepsilon_{n_
    {k+1}}/2$ for all $i\leq k$. Now, choose $y_{k+1}\in S_{\|\cdot\|}$ satisfying $\langle \varphi_{n_{k+1}},y_{k+1}\rangle \geq 1-2^{-(k+1)}$ and $\langle \varphi_{n_i},y_{k+1}\rangle =0$ for all $i\leq k$. This finishes the induction. 

    We can now apply Bishop$-$Phelps$-$Bollobás' Theorem (as stated in e.g.: Page 376 in \cite{FabHabHajMonZiz11}) to the sequences $\{\varphi_{n_k}\}_{k\in\mathbb{N}}$ and $\{y_k\}_{k\in\mathbb{N}}$ to obtain the desired result.
\end{proof}
We proceed to the proof of the main theorems of the section:

The proof of Theorem \ref{Main_Theorem_A} is the most straightforward of the three. We will construct a tuple $\Omega$ of the above form such that the sequence $\{h_n\}_{n\in\mathbb{N}}$ is actually constantly equal to a fixed vector $h_0$, which will be the direction in which the norm $\vertiii{\cdot}_\Omega$ fails to be Uniformly Rotund, and thus failing to be URED. As we have seen, the choice of the tuple $\Omega$ and the definition of $\vertiii{\cdot}_\Omega$ ensures that local properties are kept, and thus LUR is preserved.

\begin{proof}[Proof of Theorem \ref{Main_Theorem_A}]
    Let $X$ be an infinite dimensional Banach space and let $\|\cdot\|$ be a LUR norm on $X$. Let $0<\delta<1$ such that $(1-\delta)^4>\frac{1}{2}$.
    
    Using Lemma \ref{Lemma3.Almostbiorthogonal}, we can find a sequence $\{y_n\}_{n\in\mathbb{N}}$ in $S_{\|\cdot\|}$ and a weak$^*$ null sequence $\{\varphi_n\}_{n\in\mathbb{N}}$ in $S_{\|\cdot\|^*}$ such that $\langle \varphi_n,y_n\rangle =1$ for all $n\in\mathbb{N}$, and $|\langle \varphi_m,y_n\rangle|<\min\left\{\frac{2(1-\delta)^4-1}{2},\delta\right\}$ for all $m\neq n\in\mathbb{N}$.

    Putting $h_n=y_1$ and $g_n=\varphi_1$ for all $n\in\mathbb{N}$, and setting $x_n=y_{n+1}$ and $f_n=\varphi_{n+1}-\langle\varphi_{n+1},y_1\rangle\varphi_1$ for all $n\in\mathbb{N}$, we have that $x_n,h_n\in S_{\|\cdot\|}$ and $f_n,g_n\in S_{\|\cdot\|^*}$. Moreover, we obtain that
    \begin{align*}
        \langle f_n,h_n\rangle =0,\qquad\langle g_n,h_n\rangle =1,\qquad\langle f_n,x_n\rangle\geq 1-\delta
    \end{align*}
    with $|\langle f_m,x_n\rangle| < 2(1-\delta)^4-1$ for every $m\neq n\in\mathbb{N}$. Since the sequence $\{f_n\}_{n\in\mathbb{N}}$ is weak$^*$ null, we have that for every $x\in X\setminus\{0\}$ there exists an open neighbourhood $U_x$ of $x$ and $n_x\in\mathbb{N}$ such that $|\langle f_n,z\rangle|\leq(1-\delta)^2\|z\|$ for all $z\in U_x$ and all $n\geq n_x$. 

    Therefore, setting $\alpha_n=(x_n,h_n,f_n,g_n)$ for every $n\in\mathbb{N}$ and $\Omega = (\delta, C,\{\alpha_n\}_{n\in\mathbb{N}}, \{2^{-n}\}_{n\in\mathbb{N}})$ with $C=1+\delta$ we are able to define the norms $\vertiii{\cdot}_{\alpha_n,\delta,C}$ and $\vertiii{\cdot}_n$ for every $n\in\mathbb{N}$, and the norm $\vertiii{\cdot}_\Omega$ in $(X,\|\cdot\|)$ as in equations \eqref{DefinitionNormAlphaDelta},\eqref{NormN} and \eqref{NormOmega}. Then we have that 
    $$\|\cdot\|\leq\vertiii{\cdot}_\Omega\leq\frac{1}{(1-\delta)^2}\|\cdot\|. $$
    Since $\delta$ can be taken to be arbitrarily small and LUR norms are dense in $X$, we obtain that every norm in $X$ can be approximated uniformly on bounded sets by norms of this form.

    As we mentioned after the definition of $\vertiii{\cdot}_\Omega$, if $\|\cdot\|$ is LUR, then so is $\vertiii{\cdot}_\Omega$, and thus it only remains to show that this last norm is not URED to finish the proof. Indeed, using Lemma \ref{Lemma2.Norm_A_notUR} we have that setting $\lambda_n=\frac{1}{\vertiii{x_n}_{\alpha_n,\delta,C}}$ and 
    \begin{align*}
        a_n&= \lambda_n x_n-\frac{(1-\lambda_n)}{1+\delta}y_1\\
        b_n&= \lambda_n x_n+\frac{(1-\lambda_n)}{1+\delta}y_1,
    \end{align*} 
    the sequences $\{\|a_n\|_\Omega\}_{n\in\mathbb{N}}$, $\{\|b_n\|_\Omega\}_{n\in\mathbb{N}}$ and $\left\{\vertiii{\frac{a_n+b_n}{2}}_\Omega\right\}_{n\in\mathbb{N}}$ converge to $1$, while $b_n-a_n=2\frac{(1-\lambda_n)}{1+\delta}y_1$. By $(i)$ in Lemma \ref{Lemma1.Properties_norm_1}, we have that $\lambda_n\leq \frac{1}{(1+\varepsilon_{\delta,1})^{1/2}}<1$ for all $n\in\mathbb{N}$, and thus we conclude that $\vertiii{\cdot}_\Omega$ is not Uniformly Rotund in the direction of $y_1$, and hence fails to be URED.

\end{proof}

For the proof of Theorem \ref{Main_Theorem_B}, we will need a sequence of directions $\{h_n\}_{n\in\mathbb{N}}$ which is not weakly null but which forms a uniformly separated set. In this way, using again Lemma \ref{Lemma3.Almostbiorthogonal}, the resulting norm $\vertiii{\cdot}_\Omega$ will not be WUR, though it will still be URED as we are going to show. 

\begin{proof}[Proof of Theorem \ref{Main_Theorem_B}]
    Let $X$ be an infinite dimensional Banach space, and suppose $X$ admits a URED and LUR norm $\|\cdot\|$. Let $0<\delta<1$ such that $(1-\delta)^4>\frac{1}{2}$.
    
    Fix $v_0\in S_{\|\cdot\|}$ and $\psi_0\in S_{\|\cdot\|^*}$ such that $\langle\psi_0,v_0\rangle=1$. We define the projection $P_0\colon X\rightarrow \text{ker}(\psi_0)$ given by $P_0x=x-\langle\psi_0,x\rangle v_0$. If we consider the infinite dimensional subspace $(\text{ker}(\psi_0),\|\cdot\|)$ with the norm inherited from $(X,\|\cdot\|)$, the projection $P_0$ has norm at most $2$. 
    
    We can apply Lemma \ref{Lemma3.Almostbiorthogonal} to obtain a sequence $\{y_n\}_{n\in\mathbb{N}}$ in $S_{(\text{ker}(\psi_0),\|\cdot\|)}$ and a weak$^*$ null sequence $\{\varphi_n\}_{n\in\mathbb{N}}$ in $S_{(\text{ker}(\psi_0)^*,\|\cdot\|^*)}$, such that $\langle\varphi_n,y_n\rangle=1$ for all $n\in\mathbb{N}$ and $|\langle \varphi_m,y_n\rangle|<\min\left\{\frac{2(1-\delta)^4-1}{2},\delta\right\}$ for all $m\neq n\in\mathbb{N}$.

    Now, we define for every $n\in\mathbb{N}$ the following vectors and functionals:
    \begin{align*}
        x_n&= y_{2n}&&\in S_{\|\cdot\|},\\ 
        h_n&= y_{2n+1}+v_0&&\in X,\\
        f_n&= \left(\varphi_{2n}-\langle \varphi_{2n},y_{2n+1}\rangle\varphi_{2n+1}\right)\circ P_0&&\in X^*,\\
        g_n&= \varphi_{2n+1}\circ P_0&&\in X^*,
    \end{align*}
    which satisfy that $1\leq \|h_n\|,\|g_n\|^*\leq 2$ and $\|f_n\|^*\leq 2(1+\delta)$. We also have that
    $$ \langle f_n,h_n\rangle =0,\qquad\langle g_n,h_n\rangle=1,\qquad\langle f_n,x_n\rangle\geq 1-\delta$$
    for every $n\in\mathbb{N}$, while for every $m\neq n\in\mathbb{N}$ it holds that $\langle f_m,x_n\rangle< 2(1-\delta)^4-1$. Importantly, the sequences $\{f_n\}_{n\in\mathbb{N}}$ and $\{g_n\}_{n\in\mathbb{N}}$ are still weak$^*$ null in $X^*$, and thus in particular for every $x\in X\setminus\{0\}$ there exists an open neighbourhood $U_x$ of $x$ and $n_x\in\mathbb{N}$ such that $|\langle f_n,z\rangle|\leq(1-\delta)^2\|z\|$ for all $z\in U_x$ and all $n\geq n_x$. Finally, notice as well that the sequence $\{h_n\}_{n\in\mathbb{N}}$ is not weakly null, since $\langle h_n,\psi_0\rangle =1$ for all $n\in\mathbb{N}$. 

    We set now $\Omega=(\delta,C,\{\alpha_n\}_{n\in\mathbb{N}},\{2^{-n}\}_{n\in\mathbb{N}})$, where $C=2(1+\delta)$ and $\alpha_n=(x_n,h_n,f_n,g_n)$ for all $n\in\mathbb{N}$. We can define the norms $\vertiii{\cdot}_{\alpha_n,\delta,C}$ and $\vertiii{\cdot}_{n}$ for every $n\in\mathbb{N}$ as in equations \eqref{DefinitionNormAlphaDelta},\eqref{NormN}, as well as the final renorming $\vertiii{\cdot}_\Omega$ as in equation \eqref{NormOmega}. The density of norms of this form follows as in the previous theorem. 
    
    As discussed in the definition of these norms, we have that $\vertiii{\cdot}_n$ is URED and LUR for every $n\in\mathbb{N}$, and $\vertiii{\cdot}_\Omega$ is LUR. 
    
    Similarly to the proof of Theorem \ref{Main_Theorem_A}, we define $\lambda_n=\frac{1}{\vertiii{x}_{\alpha_n,\delta,C}}$ and the pair $a_n=\lambda_nx_n+\frac{1-\lambda_n}{2(1+\delta)}h_n$ and $b_n=\lambda_nx_n-\frac{1-\lambda_n}{2(1+\delta)}h_n$, which show in combination with Lemma \ref{Lemma2.Norm_A_notUR} and $(i)$ in Lemma \ref{Lemma1.Properties_norm_1} that $\vertiii{\cdot}_\Omega$ is not WUR, since $\{\vertiii{a_n}_\Omega\}_{n\in\mathbb{N}}$, $\{\vertiii{b_n}_\Omega\}_{n\in\mathbb{N}}$ and $\left\{\vertiii{\frac{a_n+b_n}{2}}_\Omega\right\}_{n\in\mathbb{N}}$ all converge to $1$ but $\{b_n-a_n\}_{n\in\mathbb{N}}$ is not a weakly null sequence. 
    
    It only remains to show that $\vertiii{\cdot}_\Omega$ is URED. Fix $h\in S_{\|\cdot\|}$ and consider two sequences $\{c_k\}_{n\in\mathbb{N}}$ and $\{d_k\}_{n\in\mathbb{N}}$ in $S_{\vertiii{\cdot}_\Omega}$ with $d_k-c_k=\xi_k h$ for some $\xi_k\in \mathbb{R}$ for all $k\in\mathbb{N}$ and such that $\vertiii{\frac{c_k+d_k}{2}}_{\Omega}\rightarrow 1$. Notice that the sequence $\{\xi_k\}_{k\in\mathbb{N}}$ is bounded. Suppose by contradiction that $\{\xi_k\}_{k\in\mathbb{N}}$ does not converge to $0$. Then, by passing to a subsequence, we may assume that there exists $\varepsilon_0>0$ such that $|\xi_k|\geq \varepsilon_0$ for all $k\in\mathbb{N}$. 
    
    Define $\vertiii{\cdot}_{\Omega_n}=\max_{m\leq n}\{\|\cdot\|,\vertiii{\cdot}_m\}$ for all $n\in\mathbb{N}$, which is an equivalent URED norm in $X$ by Lemma \ref{Lemma6:Maximumofrotund}. Suppose first that there exists $n_0\in\mathbb{N}$ such that $\vertiii{\frac{c_k+d_k}{2}}_\Omega=\vertiii{\frac{c_k+d_k}{2}}_{\Omega_{n_0}}$ for all $k\in\mathbb{N}$. Then, since $\vertiii{c_k}_{\Omega_{n_0}},\vertiii{d_k}_{\Omega_{n_0}}\leq 1$ for all $k\in\mathbb{N}$ and $\vertiii{\cdot}_{\Omega_{n_0}}$ is URED, this already leads to the desired contradiction. 

    Hence, by passing to a subsequence again, we may suppose that $\vertiii{\frac{c_k+d_k}{2}}_\Omega>\left\|\frac{c_k+d_k}{2}\right\|$ for every $k\in\mathbb{N}$, and that there exists a sequence $\{n_k\}_{k\in\mathbb{N}}$ of natural numbers with $n_k\rightarrow \infty$ and such that $\vertiii{\frac{c_k+d_k}{2}}_\Omega=\vertiii{\frac{c_k+d_k}{2}}_{n_k}$ for all $k\in\mathbb{N}$. Equation \eqref{NormN_ineq} shows that for all $k\in\mathbb{N}$ we have
    $$ \vertiii{\frac{c_k+d_k}{2}}_{n_k}\leq\vertiii{\frac{c_k+d_k}{2}}_{\alpha_{n_k},\delta,C}\leq \frac{1}{1+2^{-n_k}}\vertiii{\frac{c_k+d_k}{2}}_{n_k}. $$
    Now, since $2^{-n_k}\rightarrow 0$, with the definition of $\vertiii{\cdot}_{\alpha_{n_k},\delta,C}$ (see equation \eqref{DefinitionNormAlphaDelta}) and noting that $\vertiii{\frac{c_k+d_k}{2}}_{\alpha_{n_k},\delta,C}>\left\|\frac{c_k+d_k}{2}\right\|$ for all $k\in\mathbb{N}$, we deduce that 
    \begin{equation}
    \label{Lim_middle_points}
    \left((1-\delta)^2\vertiii{P_{\alpha_{n_k}}\left(\frac{c_k+d_k}{2}\right)}^2_{\widehat{B}_{\alpha_{n_k},\delta}}+\varepsilon_{\delta,C}\left\|P_{\alpha_{n_k}}\left(\frac{c_k+d_k}{2}\right)\right\|^2\right)^{1/2}\longrightarrow 1, 
    \end{equation}
    where $P_{\alpha_{n_k}}$ is the projection from $X$ onto $\text{ker}(g_{n_k})$ given by $P_{\alpha_{n_k}}x=x-\langle g_{n_k},x\rangle h_{n_k}$, and $\varepsilon_{\delta,C}=\frac{1-(1-\delta)^2}{(1+C^2)^2}>0$. Note that, importantly, the constant $\varepsilon_{\delta,C}$ does not depend on $k\in\mathbb{N}$.

    For every $k\in\mathbb{N}$, the point $c_k$ is in $S_{\vertiii{\cdot}_\Omega}$, so we obtain that $\vertiii{c_k}_{\alpha_{n_k},\delta,C}\leq \frac{1}{1+2^{-n_k}}$, which implies that $\limsup_{k\rightarrow\infty}\vertiii{c_k}_{\alpha_{n_k},\delta,C}\leq 1$. Similarly we obtain that $\limsup_{k\rightarrow\infty}\vertiii{d_k}_{\alpha_{n_k},\delta,C}\leq 1$. Therefore, we have that 
  \begin{align*}
      \limsup_{k\rightarrow\infty}\left((1-\delta)^2\vertiii{P_{\alpha_{n_k}}c_k}^2_{\widehat{B}_{\alpha_{n_k},\delta}}+\varepsilon_{\delta,C}\left\|P_{\alpha_{n_k}}c_k\right\|^2\right)^{1/2}&\leq 1,\text{ and }\\
      \limsup_{k\rightarrow\infty}\left((1-\delta)^2\vertiii{P_{\alpha_{n_k}}d_k}^2_{\widehat{B}_{\alpha_{n_k},\delta}}+\varepsilon_{\delta,C}\left\|P_{\alpha_{n_k}}d_k\right\|^2\right)^{1/2}&\leq 1
  \end{align*}

    Hence, using also equation \eqref{Lim_middle_points}, we can apply Remark \ref{remark_Q} and equation \eqref{LinearityQ}, together with the non-negativity of the function $Q_{\|\cdot\|}$ to obtain that $Q_{\|\cdot\|}(P_{\alpha_{n_k}}c_k,P_{\alpha_{n_k}}d_k)\rightarrow 0$. Define $v_k=\xi_k\langle g_{n_k},h\rangle h_{n_k}\in X$ for every $k\in\mathbb{N}$. Then $\{v_k\}_{k\in\mathbb{N}}$ converges to $0$ in norm because $\{\xi_k\}_{k\in\mathbb{N}}$ is bounded and $\{g_{n_k}\}_{k\in\mathbb{N}}$ is weak$^*$ null. Therefore $Q_{\|\cdot\|}(P_{\alpha_{n_k}}c_k,P_{\alpha_{n_k}}d_k+v_k)\rightarrow 0$ and moreover $P_{\alpha_{n_k}}d_k+v_k-P_{\alpha_{n_k}}c_k=\xi_k h$ for all $k\in\mathbb{N}$. Since $\|\cdot\|$ is URED, we apply Proposition \ref{Prop2.URED_Q_Characterization} and conclude that $\{\xi_k\}_{k\in\mathbb{N}}$ converges to $0$. This leads to a contradiction and the proof is finished.

\end{proof}

Finally, in the superreflexive case, the set $\{h_n\}_{n\in\mathbb{N}}$ of directions in which we approximate the segments in the sphere will consist of a weakly null sequence. We will then obtain WUR without UR similarly as in the previous proof. Note however that, as we will see, the additional requirement that $\{h_n\}_{n\in\mathbb{N}}$ is weakly null impedes us to easily ensure that the sequence of orthogonal functionals $\{f_n\}_{n\in\mathbb{N}}$ is weak$^*$ null as well. This property was crucial in both previous proofs in order to show that $\vertiii{\cdot}_\Omega$ is well defined. To solve this, we will employ a simple geometric argument which is possible due to the existence of a uniformly rotund norm.

In a Banach space $X$, given a closed and convex set $C\subset X$, a functional $f\in X^*$, and $r>0$, we define the set $S(C,f,\varepsilon)=\{x\in C\colon \langle f,x\rangle>r\}$. Sets of this form are called \emph{slices of $C$}. Recall that a norm $\|\cdot\|$ in a Banach space is uniformly rotund if and only if for every $\varepsilon>0$ there exists $\delta>0$ such that for every $f\in S_{\|\cdot\|^*}$, the slice $S(B_{\|\cdot\|},f,1-\delta)$ has diameter less than $\varepsilon$.

\begin{proof}[Proof of Theorem \ref{Main_Theorem_C}]
    Let $X$ be an infinite dimensional superreflexive Banach space with a UR norm $\|\cdot\|$, and let $0<\delta<1$ such that $(1-\delta)^4>\frac{1}{2}$. Additionally, using that $\|\cdot\|$ is UR, we may take $\delta$ small enough such that the diameter of the slice $S(B_{\|\cdot\|},f,(1-\delta)^3)$ is less than $\frac{1}{4}$ for all $f\in X^*$ with $\|f\|^*\leq 2$.
    
    Using reflexivity, we may apply Lemma \ref{Lemma3.Almostbiorthogonal} to $(X^*,\|\cdot\|^*)$ to obtain a sequence $\{y_n\}_{n\in\mathbb{N}}$ in $S_{\|\cdot\|^*}$ and a weakly null sequence $\{\varphi_n\}_{n\in\mathbb{N}}$ in $S_{\|\cdot\|}$ such that $\langle y_n,\varphi_n\rangle =1$ for all $n\in\mathbb{N}$ and $\langle y_m,\varphi_n\rangle <\min\left\{\frac{2(1-\delta)^4-1}{2},\delta\right\}$ for all $m\neq n\in\mathbb{N}$. Fix $n\in\mathbb{N}$ and define
    \begin{align*}
        x_n&= \varphi_{2n}&&\in S_{\|\cdot\|},\\ 
        h_n&= \varphi_{2n+1}&&\in S_{\|\cdot\|},\\
        f_n&= y_{2n}-\langle y_{2n},\varphi_{2n+1}\rangle y_{2n+1}&&\in X^*,\\
        g_n&= y_{2n+1}&&\in S_{\|\cdot\|^*}.
    \end{align*}
    It holds that $\|f_n\|^*\leq 1+\delta$ and
    $$ \langle f_n,h_n\rangle =0,\qquad\langle g_n,h_n\rangle=1,\qquad\langle f_n,x_n\rangle\geq 1-\delta.$$
    Notice as well that the sequence $\{h_n\}_{n\in\mathbb{N}}$ is weakly null. Additionally, for $m\neq n\in \mathbb{N}$ we have that $\langle f_m,x_n\rangle <2(1-\delta)^4-1$. 
    
    Set $\alpha_n=(x_n,h_n,f_n,g_n)$ for every $n\in\mathbb{N}$ and $\Omega=(\delta,C,\{\alpha_n\}_{n\in\mathbb{N}},\{2^{-n}\}_{n\in\mathbb{N}})$ with $C=1+\delta$. In order to define the norm $\vertiii{\cdot}_\Omega$ properly, it only remains to show that for every $x\in X\setminus\{0\}$ there exists an open neighbourhood $U_x$ of $x$ and $n_x\in\mathbb{N}$ such that $|\langle f_n,z\rangle|\leq(1-\delta)^2\|z\|$ for all $z\in U_x$ and all $n\geq n_x$.  We will show that for any $x\in S_{\|\cdot\|}$ there exists at most one $n\in\mathbb{N}$ such that $|\langle f_n,x\rangle| >(1-\delta)^3$, which by homogeneity and by the fact that $(1-\delta)^3<(1-\delta)^2$, implies the condition we need.

    Indeed, suppose there existed such $x\in S_{\|\cdot\|}$ and $m\neq n\in\mathbb{N}$ such that $|\langle f_m,x\rangle|>(1-\delta)^3$ and $|\langle f_n,x\rangle|>(1-\delta)^3$. We assume that $\langle f_j,x\rangle >0$ for $j\in\{m,n\}$, since the other possibilities are proven similarly. Then $x$ belongs to both slices $S(B_{\|\cdot\|},f_m,(1-\delta)^3)$ and $S(B_{\|\cdot\|},f_n,(1-\delta)^3)$, which have diameter $1/4$ by choice of $\delta$. Since the point $x_j$ also belongs to the slice $S(B_{\|\cdot\|},f_j,(1-\delta)^3)$ for both $j\in\{m,n\}$, we obtain using the triangle inequality that $\|x_m-x_n\|\leq \frac{1}{2}$. However, considering the functional $y_{2n}\in S_{\|\cdot\|^*}$ we get:
    $$ \|x_n-x_m\|\geq |\langle y_{2n},x_n\rangle|-|\langle y_{2n},x_m\rangle|>1-\delta>\frac{1}{2},$$
    a contradiction. 

    Hence, we can define the norm $\vertiii{\cdot}_\Omega$ as in equation \eqref{NormOmega}, which is LUR. We also consider the norms $\vertiii{\cdot}_{\alpha_n,\delta,C}$ and $\vertiii{\cdot}_{n}$ for every $n\in\mathbb{N}$ as in equations \eqref{DefinitionNormAlphaDelta},\eqref{NormN}. Recall that $\vertiii{\cdot}_n$ is UR for all $n\in\mathbb{N}$. The rest of the proof is very similar to the one of the previous theorem. 

    Indeed, to show that $\vertiii{\cdot}_\Omega$ is not UR, it is enough to consider for every $n\in\mathbb{N}$ the points $a_n=\lambda_nx_n+\frac{1-\lambda_n}{2(1+\delta)}h_n$ and $b_n=\lambda_nx_n-\frac{1-\lambda_n}{2(1+\delta)}h_n$ with $\lambda_n=\frac{1}{\vertiii{x_n}_{\alpha_n,\delta,C}}$, and apply Lemma \ref{Lemma2.Norm_A_notUR} to the resulting sequences. 

    Finally, we prove that $\vertiii{\cdot}_\Omega$ is WUR. Let $\{c_k\}_{k\in\mathbb{N}}$ and $\{d_k\}_{k\in\mathbb{N}}$ be two sequences in $S_{\vertiii{\cdot}_\Omega}$ such that $\vertiii{\frac{c_k+d_k}{2}}_{\Omega}\rightarrow 1$. Suppose for the sake of contradiction that $\{d_k-c_k\}_{k\in\mathbb{N}}$ is not weakly null, and, by passing to a subsequence, fix $f_0\in X^*$ and $\varepsilon_0>0$ such that $|\langle f_0,d_k-c_k\rangle| >\varepsilon_0$ for all $k\in\mathbb{N}$. 

    Define for every $n\in\mathbb{N}$ the norm $\vertiii{\cdot}_{\Omega_n}=\max\{\|\cdot\|,\vertiii{\cdot}_n\}$, which is UR by Lemma \ref{Lemma6:Maximumofrotund}. In particular, it is WUR. If there existed $n_0\in\mathbb{N}$ such that $\vertiii{\frac{c_k+d_k}{2}}_\Omega=\vertiii{\frac{c_k+d_k}{2}}_{\Omega_{n_0}}$ for all $k\in\mathbb{N}$, we would reach a contradiction immediately.

    Hence, passing to a further subsequence, we assume that there exists a sequence $\{n_k\}_{k\in\mathbb{N}}$ of natural numbers with $n_k\rightarrow \infty$ such that $\vertiii{\frac{c_k+d_k}{2}}_{\Omega}=\vertiii{\frac{c_k+d_k}{2}}_{n_k}>\left\|\frac{c_k+d_k}{2}\right\|$ for all $k\in\mathbb{N}$. Reasoning exactly as in the proof of the previous theorem, we obtain that $Q_{\|\cdot\|}(P_{\alpha_{n_k}}c_k,P_{\alpha_{n_k}}d_k)\rightarrow 0$, where $P_{\alpha_{n_k}}\colon X\rightarrow \text{ker}(g_{n_k})$ is the linear projection given by $P_{\alpha_{n_k}}x=x-\langle g_{n_k},x\rangle h_{n_k}$. Since $\|\cdot\|$ is UR, Proposition \ref{Prop3.UR_Q_Characterization} shows that $\{P_{\alpha_{n_k}}(d_k-c_k)\}_{k\in\mathbb{N}}$ converges to $0$ in norm. Then, since
    $$d_k-c_k=P_{\alpha_{n_k}}(d_k-c_k)+\langle g_{n_k},d_k-c_k\rangle h_{n_k} $$
    for all $k\in\mathbb{N}$, and the sequence $\{h_{n_k}\}_{k\in\mathbb{N}}$ is weakly null, we obtain that $\{d_k-c_k\}_{k\in\mathbb{N}}$ is weakly null as well. This leads to the contradiction we sought. 

    As in the previous theorems, by taking $\delta$ as small as necessary, it is clear that every norm can be uniformly approximated on bounded sets by norms with this properties.
\end{proof}

\section{$C^\infty$-smooth norm in $c_0$ with non-strictly convex dual norm}

In the final section of this note, we prove the last of the main theorems by constructing a $C^\infty$-smooth norm in $c_0$ whose dual norm is not strictly convex. The process we employ consists in considering first a non-smooth norm in $c_0$ whose dual norm is not strictly convex, and which can be constructed inductively through its finite-dimensional sections. These finite-dimensional sections of the non-smooth ball are $n$-dimensional polyhedra, which we can approximate by $C^\infty$-smooth unit balls in $\ell_\infty^n$. We then use these increasingly better $C^\infty$-smooth approximations to recreate the inductive definition of the original norm with non strictly convex dual norm while preserving $C^\infty$-smoothness.

Note that every equivalent norm in the finite-dimensional space $\ell_\infty^n$ can be uniformly approximated by a $C^\infty$-smooth norm. This is a standard result, which is discussed and improved in \cite{DevFonHaj98}. 

\begin{proof}[Proof of Theorem \ref{Main_Theorem_D}]

    We divide the proof into three parts for the convenience of the reader.

    \vspace{2mm}
    \textbf{1.- Set up and construction of finite-dimensional polyhedra}
    \vspace{2mm}

    Let $\{e_n\}_{n\in\mathbb{N}}$ be the canonical basis of $c_0$, and let $P_n\colon c_0\rightarrow \ell_\infty^n=\text{span}\{e_j\colon 1\leq j\leq n\}$ be the linear projection given by $P_n(x_j)_{j=1}^\infty=(x_j)_{j=1}^n$ for all $(x_j)_{j=1}^\infty\in c_0$ and all $n\in\mathbb{N}$. Consider $0<\delta<1$, and consider a sequence $\{\varepsilon_n\}_{n\in\mathbb{N}}$ of strictly positive numbers with $\varepsilon_1=1$ and such that $\sum_{n=1}^\infty \varepsilon_n\leq\frac{1}{\delta}$. Set $c_n= \sum_{j\leq n}\varepsilon_j$, and choose 
    \begin{align*}
        w_n\in \left(1,\frac{1-\delta c_{n-1}}{1-\delta c_{n-1}-\delta\varepsilon_n}\right)\qquad\text{ and }\qquad h_n= \frac{w_n\delta}{w_n-1},
    \end{align*}
    for every $n\geq 2$, where we select $w_n$ such that 
    \begin{align*}
        &\prod_{n=2}^\infty w_n<\infty\qquad\text{ and }\qquad\sum_{n=2}^\infty \frac{1}{h_n}<\infty.
    \end{align*}
    Consider as well a sequence $\{\eta_n\}_{n\geq 2}$ of strictly positive numbers such that 
    \begin{align*}
        (1+\eta_n)^2\left(\frac{1}{w_n}+\frac{\delta}{2h_n}\right)\leq 1\qquad\text{ and }\qquad (1+\eta_n)^2\left(\frac{h_{n+1}-1}{h_{n+1}-\delta}\right)\leq 1
    \end{align*}
    for every $n\geq 2$. We may assume as well that $\prod_{n=2}^\infty (1+\eta_n)^2\leq \frac{1}{\delta}$, and that $\frac{\delta}{2}\leq \frac{1}{1+\eta_n}$ for all $n\geq 2$.

    Define $\vertiii{\cdot}_{\infty,1}\colon \mathbb{R}\rightarrow \mathbb{R}^+$ by $\vertiii{x}_{\infty,1}= |x|$ for all $x\in \mathbb{R}$.
    
    Inductively, we define for each $n\geq 2$ the norms $\vertiii{\cdot}_{1,n}\colon \ell_\infty^n\rightarrow \mathbb{R}^+$ and $\vertiii{\cdot}_{\infty,n}\colon\ell_\infty^n\rightarrow \mathbb{R}^+$ by
    \begin{equation*}
        \vertiii{x}_{1,n}=\frac{1}{w_n}\vertiii{P_{n-1}x}_{\infty,n-1}+\frac{1}{h_n}|x_n|,\qquad\text{for every }x\in\ell_\infty^n,    
    \end{equation*}
    and
    \begin{equation*}
        \vertiii{x}_{\infty,n}=\max\left\{\vertiii{x}_{1,n},\vertiii{P_{n-1}x}_{\infty,n-1},|x_n|\right\},\qquad\text{for every }x\in\ell_\infty^n.
    \end{equation*}
    Note that $\|x\|_\infty\leq \vertiii{x}_{\infty,n}$ for all $x\in \ell_\infty^n$ and all $n\in\mathbb{N}$. Moreover, it also holds that $\vertiii{x}_{\infty,n}\leq c_n\|x\|_\infty$ for all $n\in\mathbb{N}$ and $x\in\ell_\infty^n$. Indeed, this is trivially true for $n=1$, and assuming it holds for $n-1$ with $n\geq 2$ we obtain that for any $x\in\ell_\infty^n$:
    \begin{align*}
        \vertiii{x}_{\infty,n}&\leq \max\left\{\left(\frac{c_{n-1}}{w_n}+\frac{1}{h_n}\right)\|x\|_\infty,c_{n-1}\|x\|_\infty,\|x\|_\infty\right\}\\       
        &= \max\left\{\left(\frac{c_{n-1}}{w_n}+\frac{w_n-1}{w_n\delta}\right)\|x\|_\infty,c_{n-1}\|x\|_\infty \right\}\\
        &\leq \max\left\{\left(c_{n-1}+\varepsilon_n\right)\|x\|_\infty,c_{n-1}\|x\|_\infty \right\}=c_n\|x\|_\infty.
    \end{align*}
    For the last inequality, we use that $-(1-\delta c_{n-1})\leq -w_n(1-\delta c_{n-1}-\delta\varepsilon_n)$ due to the choice of $w_n$.

    Geometrically, the unit ball of $\vertiii{\cdot}_{\infty,n}$ is an $n$-dimensional polyhedron contained in the unit ball of $\ell_\infty^n$. Figure \ref{fig1_c0} shows the unit ball of $\vertiii{\cdot}_{\infty,2}$ and the geometric implications of the choice of $w_2$ and $h_2$ with respect to $\delta$. 
    \begin{figure}
       \centering
       \includegraphics[scale=0.5]{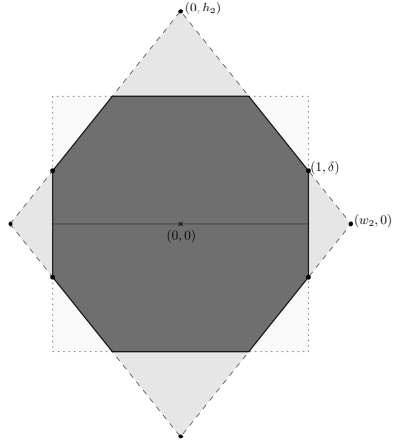}
       \caption{The dark grey region shows the unit ball of $\vertiii{\cdot}_{\infty,2}$. The light gray region is the unit ball of the norm $\vertiii{\cdot}_{1,2}$.}        
        \label{fig1_c0}
    \end{figure}

    \vspace{2mm}
    \textbf{2.- Smooth approximations and definition of the final renorming}
    \vspace{2mm}

    Next, for every $n\geq 2$, we may approximate the norm $\vertiii{\cdot}_{1,n}$ by a $C^\infty$-smooth norm $\vertiii{\cdot}_{\eta_n}\colon \ell_\infty^n\rightarrow \mathbb{R}^+$ such that
    \begin{equation*}
        \vertiii{\cdot}_{1,n}\leq \vertiii{\cdot}_{\eta_n}\leq (1+\eta_n)\vertiii{\cdot}_{1,n}.
    \end{equation*}
    
   By induction, we will define norms $\vertiii{\cdot}_n\colon\ell_\infty^n\rightarrow \mathbb{R}^+$ with the following properties for every $n\in\mathbb{N}$:
    \begin{itemize}

        \item[(i)] $\vertiii{\cdot}_n$ is a $C^\infty$-smooth norm in $\ell_\infty^n$.
        \item[(ii)] If $n\geq 2$, for all $x\in\ell_{\infty}^n$:
        \begin{equation*}
         \vertiii{x}_n\leq (1+\eta_n)\max\left\{\vertiii{x}_{\eta_n},\vertiii{P_{n-1}x}_{n-1},|x_n|\right\}\leq \prod_{j=2}^{n}(1+\eta_j)^2\vertiii{x}_{\infty,n},
        \end{equation*}  
        and
        \begin{equation*}
            \vertiii{x}_n\geq \max\left\{\vertiii{x}_{\eta_n},\vertiii{P_{n-1}x}_{n-1},|x_n|\right\}\geq\vertiii{x}_{\infty,n},
           \end{equation*}  
           for all $1\leq j\leq n$.
        \item[(iii)] If $n\geq 2$, for all $x\in \ell_{\infty}^n$, if $|x_n|\leq\frac{\delta}{2}\|x\|_\infty$, then $\vertiii{x}_n=\vertiii{P_{n-1}x}_{n-1}$.
    \end{itemize} 
    Setting $\vertiii{\cdot}_1=\vertiii{\cdot}_{\infty,1}=|\cdot|$, the previous conditions are satisfied trivially for $n=1$. Suppose that $\vertiii{\cdot}_{n-1}\colon \ell_{\infty}^{n-1}\rightarrow \mathbb{R}^+$ has been defined with the desired properties for a fixed $n\geq 2$.

    Now, let $\phi_n\colon \mathbb{R}\rightarrow \mathbb{R}^+$ be a real valued, $C^\infty$-smooth and convex function such that $\phi_n(t)=0$ for all $t\in\left[0,\frac{1}{1+\eta_n}\right]$, $\phi_n(1)=1$, and $\phi_n(t)>1$ for all $t\in (1,+\infty)$. The map $\nu_n\colon \ell_\infty^n\rightarrow \mathbb{R}^+$ given by 
    $$\nu_n(x)=\phi_n(\vertiii{x}_{\eta_n})+\phi_n\left(\vertiii{P_{n-1}x}_{n-1}\right)+\phi_n\left(|x_n|\right)$$
    is a $C^\infty$-smooth function such that the set $B_n=\{x\in\ell_\infty^n\colon \nu_n(x)\leq 1\}$ is bounded in $\ell_\infty^n$. Hence, by Corollary 1.1.23 in \cite{RuPhd}, the set $B_n$ is the unit ball of a $C^\infty$-smooth norm $\vertiii{\cdot}_n\colon \ell_\infty^n\rightarrow \mathbb{R}^+$. Notice that, given $x\in\ell_\infty^n$, $\vertiii{x}_n=1$ if and only if $\nu_n(x)=1$. Let us show that properties $(ii)$ and $(iii)$ are satisfied by this norm.  

    We prove that $(ii)$ holds for $x\in\ell_\infty^n$ with $\vertiii{x}_n=1$. For the first part, notice that if $\nu_n(x)=1$, then one of $\phi_n(\vertiii{x}_{\eta_n})$, $\phi_n\left(\vertiii{P_{n-1}x}_{n-1}\right)$ or $\phi_n\left(|x_n|\right)$ must be strictly positive. The first inequality now follows since $\phi_n$ vanishes in the interval $\left[0,\frac{1}{1+\eta_n}\right]$. 
    
    The second inequality of the first statement holds simply by definition of $\vertiii{\cdot}_{\infty,n}$ if $n=2$; while if $n>2$ we also need to use the inductive hypothesis.

    For the second statement, observe that if $\nu_n(x)=1$, then 
    $$\max\left\{\phi_n(\vertiii{x}_{\eta_n}),\phi_n\left(\vertiii{P_{n-1}x}_{n-1}\right),\phi_n\left(|x_n|\right)\right\}\leq 1,$$ 
    from which the first inequality follows since $\phi_n(t)>1$ for all $t\in(1,+\infty)$. The second inequality is a simple application of the inductive hypothesis if $n>2$. Note that the case $n=2$ should be argued separately, when the conclusion holds by the definitions of both $\vertiii{\cdot}_{\infty,2}$ and $\vertiii{\cdot}_{\eta_2}$, and the fact that $\vertiii{\cdot}_1=\vertiii{\cdot}_{\infty,1}$.

    Before proving that $(iii)$ holds as well, notice that since $\|\cdot\|_\infty\leq\vertiii{\cdot}_{\infty,n} $, the last inequality of condition $(ii)$ implies in particular that $\|x\|_\infty\leq \vertiii{x}_n$ for every $x\in\ell_\infty^n$. 

    Now, to prove property $(iii)$ we may fix $x\in\ell_\infty^n$ with $|x_n|\leq \delta\|x\|_\infty$ and assume that $\vertiii{x}_n=1$. By the previous observation, we have that $\|x\|_\infty\leq 1$, and thus $|x_n|\leq \frac{\delta}{2}$. Since $\frac{\delta}{2}\leq \frac{1}{1+\eta_n}$, we obtain directly that $\phi_n\left(|x_n|\right)=0$.

    We have as well that $\vertiii{P_{n-1}x}_{\infty,n-1}\leq 1$ by the last part of the already proven property $(ii)$, and thus we get the following estimate:
    \begin{align*}
        \vertiii{x}_{\eta_n}&\leq (1+\eta_n)\vertiii{x}_{1,n}=(1+\eta_n)\left(\frac{\vertiii{P_{n-1}x}_{\infty,n-1}}{w_n}+\frac{|x_n|}{h_n}\right)\\
        &\leq (1+\eta_n)\left(\frac{1}{w_n}+\frac{\delta}{2h_n}\right)=\frac{1}{1+\eta_n},
    \end{align*}
    since we have chosen $\eta_n$ such that the last inequality holds. Hence, we also get that $\phi_n(\vertiii{x}_{\eta_n})=0$. Since $\nu_n(x)=1$, we necessarily have that $\phi_n\left(\vertiii{P_{n-1}x}_{n-1}\right)=1$, and by convexity of $\phi_n$ we obtain that $\vertiii{P_{n-1}x}_{n-1}=1$, which proves $(iii)$.


    Once the induction is finished, we define the final renorming $\vertiii{\cdot}\colon c_0\rightarrow\mathbb{R}^+$ by 
    $$\vertiii{x}=\sup_{n\in\mathbb{N}}\left\{\vertiii{P_nx}_n\right\}$$ 
    for all $x\in c_0$. Property $(ii)$ in the induction and the fact that $\|\cdot\|_\infty\leq\vertiii{\cdot}_{\infty,n}\leq \frac{1}{\delta}\|\cdot\|_\infty$ for all $n\in\mathbb{N}$ show that
    $$ \|\cdot\|_\infty\leq \vertiii{\cdot}\leq \frac{1}{\delta^2}\|\cdot\|_\infty.$$
    
    Using property $(iii)$ we also have that given $x_0\in c_0$ with $x_0\neq 0$, there exists an open neighbourhood $U$ of $x_0$ and $n_0\in\mathbb{N}$ such that $\vertiii{x}=\vertiii{P_{n_0}x}_{n_0}$ for all $x\in U$. This implies that the norm is LFC and $C^\infty$-smooth.
    
    \vspace{2mm}
    \textbf{3.- Non-strictly convex dual ball}
    \vspace{2mm}

    It only remains to prove that the dual norm of $\vertiii{\cdot}$ is not strictly convex. Define
    \begin{align*}
        f &= \left(\prod_{j>1}^\infty\frac{1}{w_j},\frac{1}{h_2}\prod_{j>2}^\infty\frac{1}{w_j},\dots,\frac{1}{h_n}\prod_{j>n}^\infty\frac{1}{w_j},\dots\right)\in\ell_1\\
        g &= \left(0,\prod_{j>2}^\infty\frac{1}{w_j},\frac{1}{h_3}\prod_{j>3}^\infty\frac{1}{w_j},\dots,\frac{1}{h_n}\prod_{j>n}^\infty\frac{1}{w_j},\dots\right)\in\ell_1.
    \end{align*}
    We will show that the segment $[f,g]$ is contained in the sphere $S_{\vertiii{\cdot}^*}$.

    Given $x\in c_0$, using the fact that $\vertiii{P_1x}_{\infty,1}=|x_1|$ and the definition of $\vertiii{\cdot}_{\infty,n}$, it is straightforward to show inductively that 
    $$\prod_{j>1}^n\frac{1}{w_j}|x_1|+\sum_{i=2}^n \frac{1}{h_i}\prod_{j>i}^{n}\frac{1}{w_j}|x_i|\leq \vertiii{P_nx}_{\infty,n}$$
    for all $n\geq 2$. Since $w_n> 1$ for all $n\geq 2$ and $\vertiii{P_nx}_{\infty,n}\leq \vertiii{x}$ for all $n\in\mathbb{N}$, we obtain that 
    $$ \prod_{j>1}^\infty\frac{1}{w_j}|x_1|+\sum_{i=2}^\infty \frac{1}{h_i}\prod_{j>i}^{\infty}\frac{1}{w_j}|x_i|\leq \vertiii{x}.$$
    
    This shows that $|\langle f,x\rangle|\leq \vertiii{x}$ and $|\langle g,x\rangle |\leq \vertiii{x}$, which implies that $\vertiii{f}^*\leq 1$ and $\vertiii{g}^*\leq 1$. 
    
    Consider now for every $n\geq2$ the point $z_n\in c_0$ given by
    $$ z_n = \left(\prod_{j>1}^n \frac{h_j-1}{h_j-\delta},\dots,\prod_{j>n-1}^n \frac{h_j-1}{h_j-\delta},1,0,\dots\right)\in\text{span}\{e_1,\dots,e_n\}.$$
    Setting $z_1=e_1$, we have that $P_{n-1}z_n =\frac{h_n-1}{h_n-\delta}P_{n-1}z_{n-1}$ for all $n\geq 2$. We start by showing inductively that $\vertiii{P_nz_n}_{\infty,n}=\vertiii{P_nz_n}_{1,n}=1$. Indeed, this is trivially true for $n=1$, and given $n\geq 2$ and assuming it holds for $n-1$ we obtain that
    \begin{align*}
        \vertiii{P_nz_n}_{1,n}&= \frac{1}{w_n}\vertiii{P_{n-1}z_n}_{\infty,n-1}+\frac{1}{h_n}|z_n| \\
        &=\frac{1}{w_n}\frac{h_n-1}{h_n-\delta}+\frac{1}{h_n}=1.
    \end{align*}
    Now the conclusion follows by definition of $\vertiii{\cdot}_{\infty,n}$ and a further application of the inductive hypothesis.
    
    Thanks to the previous fact, and again inductively, we can show that $\vertiii{z_n}\leq (1+\eta_n)^2$ for all $n\geq 2$. In order to do this, notice that if $\vertiii{z_{n-1}}\leq (1+\eta_{n-1})^2$ for some $n>2$, using that $(1+\eta_{n-1})^2\left(\frac{h_n-1}{h_n-\delta}\right)\leq 1$ and property $(ii)$ of the definition of $\vertiii{\cdot}_n$, we get that
    \begin{align*}
        \vertiii{z_n}&=\vertiii{P_nz_n}_n=(1+\eta_n)\max\left\{\vertiii{P_nz_n}_{\eta_n},\vertiii{P_{n-1}z_n},|z_n|\right\}\\
        &\leq (1+\eta_n)\max\left\{(1+\eta_n)\vertiii{P_nz_n}_{1,n},(1+\eta_{n-1})^2\left(\frac{h_n-1}{h_n-\delta}\right)\right\}= (1+\eta_n)^2.
    \end{align*}
    Finally, we show that $\langle f, z_n\rangle = \langle g,z_n\rangle =\prod_{j>n}^\infty\frac{1}{w_j}$ for $n\geq 2$. We start by showing the base case $n=2$. For the functional $g$, the equality is straightforward from the definition. For $f$, simply observe that:
    \begin{align*}
        \langle f,z_2\rangle &= \frac{h_2-1}{h_2-\delta}\prod_{j>1}\frac{1}{w_j}+\frac{1}{h_2}\prod_{j>2}\frac{1}{w_j}\\
        &=\prod_{j>2}\frac{1}{w_j}\left(\frac{h_2-1}{h_2-\delta}\frac{1}{w_2}+\frac{1}{h_2}\right)=\prod_{j>2}\frac{1}{w_j}.
    \end{align*}

    Assume now the conclusion holds for $n-1$ for some $n>2$. Then, for $\varphi\in \{f,g\}$, and writing $\varphi_{|n}=(\varphi_i)_{i=1}^n\in(\ell_\infty^n)^*$ to denote the projection of $\varphi$ onto its first $n$ coordinates, we obtain:
    \begin{align*}    
        \langle \varphi,z_n\rangle &=\langle \varphi_{|n},P_nz_n\rangle =\langle \varphi_{|(n-1)}, P_{n-1}z_n\rangle +\frac{1}{h_n}\prod_{j>n}^\infty \frac{1}{w_j}\\
        &=\frac{h_n-1}{h_n-\delta}\langle \varphi_{|(n-1)},P_{n-1}z_{n-1}\rangle +\frac{1}{h_n}\prod_{j>n}^\infty \frac{1}{w_j}\\
        &=\prod_{j>n}^\infty\frac{1}{w_j}\left(\frac{h_n-1}{h_n-\delta}\frac{1}{w_n}+\frac{1}{h_n}\right)=\prod_{j>n}^\infty\frac{1}{w_j}.
    \end{align*}
    This implies that $\left\langle \frac{f+g}{2},z_n\right\rangle=\prod_{j>n}^\infty\frac{1}{w_j}$ as well. Since we have found a sequence $\{z_n\}_{n\in\mathbb{N}}\subset c_0$ such that $\vertiii{z_n}\rightarrow 1$ and $\left\langle \frac{f+g}{2},z_n\right\rangle\rightarrow 1$, we get that $\vertiii{\frac{f+g}{2}}^*\geq 1$. This finishes the proof, as we have shown already that the norm of $f$ and $g$, and hence the norm of $\frac{f+g}{2}$, is less than $1$.
\end{proof}

\section*{Acknowledgements}
This research was supported by GA23-04776S and by the project SGS22/053/OHK3/1T/13. The second author's research has been supported by PAID-01-19 and by grant PID2021-122126NB-C33 funded by MCIN/AEI/10.13039/501100011033 and by “ERDF A way of making Europe”.

\printbibliography
\end{document}